\newcommand{\p}[1]{{\mathbb{P}^{#1}}}
\newcommand{\calf}{{\mathcal F}}
\newcommand{\GL}{\operatorname{GL}}
\newcommand{\rk}{\operatorname{rk}}
\DeclareMathOperator{\im}{{im}}
\DeclareMathOperator{\Hom}{Hom}
\DeclareMathOperator{\End}{End}
\newtheorem{thm}{Theorem}
\newtheorem{lem}[thm]{Lemma}
\newtheorem{pps}[thm]{Proposition}
\newtheorem{dfn}[thm]{Definition}
\newtheorem{obs}[thm]{Remark}
\title[Obstruction theory for moduli spaces of framed flags]{Obstruction theory for moduli spaces of framed flags of sheaves on the projective plane}
\author{Rodrigo A. von Flach}
\address{Centro de Ci\^encias Exatas e Tecnol\'ogicas \\
Universidade Federal do Rec\^oncavo da Bahia \\
Rua Rui Barbosa, 710 \\
44380-000 Cruz das Almas, BA, Brazil}
\email{rodrigovonflach@ufrb.edu.br}
\author{Marcos Jardim}
\address{IMECC - UNICAMP \\
Departamento de Matem\'atica \\ Rua S\'ergio  Buarque de Holanda, 651\\
13083-970 Campinas-SP, Brazil}
\email{jardim@ime.unicamp.br}
\author{Valeriano Lanza}
\address{IME - UFF \\
Departamento de Análise \\ Rua Professor Marcos Waldemar de Freitas Reis, s/n\\
24210-201 Niterói-RJ, Brazil}
\email{vlanza@id.uff.br}
\begin{document}

\begin{abstract}
In a previous paper, the first two named authors established an isomorphism between the moduli space of framed flags of sheaves on the projective plane and the moduli space of stable representations of a certain quiver. In the present note, we substitute one of the claims made, namely \cite[Theorem 17]{Ja4}, for a weaker claim regarding the existence of unobstructed points in the quiver moduli space. We also extend some of the results of the cited paper, concerning the maximal stability chamber within which the isomorphism mentioned holds, and the existence of a perfect obstruction theory for the quiver moduli space.
\end{abstract}

\maketitle

\section{Introduction}

In \cite{Ja4}, the first two named authors gave a detailed account of the ADHM construction of the moduli space of framed flags of sheaves on the projective plane, which are triples $(E,F,\varphi)$ consisting of a torsion-free sheaf $F$ on $\p2$, a framing $\varphi$ of $F$ at a line $\ell_\infty$ and a subsheaf $E\subset F$ such that the quotient $F/E$ is supported away from the framing line $\ell_\infty$. Setting $r:=\rk(E) = \rk(F)$, $n := c_2(F)$, and $l:=h^0(F/E)$, we denote by $\calf(r,n,l)$ the moduli space of such triples with these invariants fixed.

The main result of \cite{Ja4} was to show that $\calf(r,n,l)$ is isomorphic to the moduli space of representations of the \emph{enhanced ADHM quiver}. The latter is the quiver $\mathcal{Q}$ given by
\begin{equation}\label{eq:quiverADHMaument-est}
	\begin{tikzpicture}[->,>=stealth',shorten >=1pt,auto,node distance=2.5cm,
	semithick]
	\tikzstyle{every state}=[fill=white,draw=none,text=black]
	
	\node[state]    (A)                {$e_1$};
	\node[state]    (B) [left of=A]    {$e_2$};
	\node[state]    (C) [right of=A]   {$e_{\infty}$};
	
	\path (A) edge [loop above]          node {$\alpha$}     (A)
	edge [loop below]          node {$\beta$}      (A)
	edge [out=20,in=160]       node {$\eta$}       (C)
	%	edge [out=-160,in=-20]     node {$\gamma$}     (B)
	(B) edge [loop above]          node {$\alpha'$}    (B)
	edge [loop below]          node {$\beta'$}     (B)
	edge [out=0,in=180]       node {$\phi$}       (A)
	(C) edge [out=-160,in=-20]     node {$\xi$}        (A);
	\end{tikzpicture}
\end{equation}
with the relations
\begin{equation*}
	\begin{array}{ccccc}
	\label{eq:quiverADHMenh}
	\alpha\beta - \beta\alpha + \xi\eta, & \alpha\phi -\phi\alpha', & \beta\phi-\phi\beta', &\eta\phi, & \alpha'\beta'-\beta'\alpha'\,.
	\end{array}
\end{equation*}

A \textit{representation of the enhanced ADHM quiver of type $(r,c,c')$} is a collection $X=(W,V,V',A,B,I,J,A',B',F)$, where $W$, $V$, $V'$ are complex vector spaces of dimension $r$, $c$ and $c'$, respectively, and $A,$ $B\in End(V)$, $I\in Hom(W,V)$, $J\in Hom(V,W)$, $A'$, $B'\in End(V')$ and $F\in Hom(V',V)$ satisfy the \textit{enhanced ADHM equations}:
\begin{itemize}
 \item[\bf (R1)] $[A,B]+IJ=0$;
 \item[\bf (R2)] $AF-FA'=0$; 
 \item[\bf (R3)] $BF-FB'=0$;
 \item[\bf (R4)] $JF=0$;
 \item[\bf (R5)] $[A',B']=0$.
\end{itemize}

A representation $X = (W,V,V',A,B,I,J,A',B',F)$ can be illustrated as the diagram below:

\begin{equation*}
	\begin{tikzpicture}[->,>=stealth',shorten >=1pt,auto,node distance=2.5cm,
	semithick]
	\tikzstyle{every state}=[fill=white,draw=none,text=black]
	
	\node[state]    (A)                {$V$};
	\node[state]    (B) [left of=A]    {$V'$};
	\node[state]    (C) [right of=A]   {$W$};
	
	\path (A) edge [loop above]     node {$A$}     (A)
	edge [loop below]     node {$B$}     (A)
	edge [out=20,in=160]      node {$J$}     (C)
	%edge [bend left]      node {$\gamma$}     (B)
	(B) edge [loop above]     node {$A'$}    (B)
	edge [loop below]     node {$B'$}    (B)
	edge [out=0, in=180]      node {$F$}     (A)
	(C) edge [out=-160,in=-20]      node {$I$}     (A);
	\end{tikzpicture}
\end{equation*}

Let $\varphi: W \longrightarrow \mathbb{C}^r$ be an isomorphism. Then, if $X$ is a representation of the enhanced ADHM quiver, $(X,\varphi)$ is called a \textit{framed representation} of the enhanced ADHM quiver\label{framed-representation}. Two framed representations $(X,\varphi)$ and $(\widetilde{X},\widetilde{\varphi})$ are said to be isomorphic if there exists an isomorphism
\begin{equation*}
(\xi_1,\xi_2,\xi_{\infty}): X \longrightarrow \widetilde{X},
\end{equation*}
such that $\widetilde{\varphi}\xi_{\infty}=\varphi$.

The group $\GL_{c}(\mathbb{C})\times \GL_{c'}(\mathbb{C})$ acts on the space of these ADHM data:
\[ (A,B,A',B',F,I,J)\mapsto (h Ah^{-1},h Bh^{-1},h' A'h'^{-1},h' B'h'^{-1},h Fh'^{-1},h I,Jh^{-1})\,, \]
for $h\in\GL_{c}(\mathbb{C})$, $h'\in\GL_{c'}(\mathbb{C})$. 

The moduli space $\mathcal{N}(r,c,c')$ of representations of $\mathcal{Q}$ with fixed dimension vector $(r,c,c')$ is the GIT quotient of the space of representations $X$ satisfying the relations \textbf{(R1)}-\textbf{(R5)} under the action of $\GL_{c}(\mathbb{C})\times \GL_{c'}(\mathbb{C})$; note that the space $\mathcal{N}(r,c,c')$ depends on the choice of GIT stability parameters.  

We observe that the quiver in display \eqref{eq:quiverADHMaument-est} is actually a simplified version of the enhanced ADHM quiver used in \cite{Ja4}. Although we are correcting also parts of the original paper prior to this simplification, which occurs at \cite[p. 143]{Ja4}, nothing concerning the additional map $G$, introduced at \cite[p. 141]{Ja4}, will be modified in substance. To make reading more fluent, we have therefore decided to omit the map $G$ entirely in this work.

One of the main results of \cite{Ja4} is the existence of an isomorphism of schemes $\mathfrak{b}:\mathcal{N}(r,n+l,l)\to\mathcal{F}(r,n,l)$ for every $r,n,l\ge1$, see \cite[Theorem 18]{Ja4}. 

In the present note, we retract the claim made in \cite[Theorem 17]{Ja4} that $\mathcal{N}(r,c,1)$ is non singular for every $r,c\ge 1$; the proof contains a gap that we could not circumvent; in fact, a similar issue is present in \cite[\S 3.3]{enhADHM}. Our main goal here is to argue that $\mathcal{N}(r,c,c')$ has a perfect obstruction theory for every $r,c,c'\ge1$ and that $\mathcal{N}(r,c,1)$ is generically unobstructed, meaning that it contains an open subset of points with vanishing obstruction space.

Our motivation comes from recent publications by Bonelli, Fasola and Tanzini \cite{BFT2,BFT1}, where the authors consider, in connection with a particular class of surface defects in four-dimensional supersymmetric gauge theories, representations of a quiver that generalizes the one in display \eqref{eq:quiverADHMaument-est} and is related to longer flags of sheaves on $\p2$. Since these authors quote the results of \cite{Ja4}, we felt that it was necessary to set the record straight regarding the flaws contained in \cite{Ja4}. 
It is important to emphasize, though, that Bonelli, Fasola and Tanzini mentioned in their papers only the correct results in \cite{Ja4}, avoiding the incorrect statements.

This note is organized as follows. Section \ref{sec-stability} is dedicated to generalizing \cite[Lemma 4]{Ja4} and describe the chamber in the space of stability conditions within which the isomorphism $\mathfrak{b}:\mathcal{N}(r,n + l,l) \to \mathcal{F}(r,n,l)$ holds, see Figure 1 below. We then bridge a gap in the proof of \cite[Lemma 13]{Ja4} in Section \ref{cor-lemma13}.
In Section \ref{sec:obst}, we show that $\mathcal{N}(r,c,c')$ has a perfect obstruction theory and has expected dimension equal to $2rc-rc'$ when $c'>1$, and equal to $2rc-r+1$ when $c'=1$. Finally, we provide examples of unobstructed points in $\mathcal{N}(r,c,1)$ in Section \ref{sec:sm}.

The smoothness of the moduli space $\calf(r,n,1)$ remains an important open problem; unfortunately, the ADHM construction does not seem to be the appropriate tool to tackle it. While one can check that $\calf(r,n,l)$ can indeed be singular when $l\ge2$ (see comment in the first two paragraphs of Section \ref{sec:sm}), it is still worth asking whether $\calf(r,n,l)$ is irreducible or connected for arbitrary values of $r$, $n$ and $l$; we do hope that the ADHM construction here presented will be useful to approach these questions.

\subsection*{Acknowledgments}
MJ is supported by the CNPQ grant number 302889/2018-3 and the FAPESP Thematic Project 2018/21391-1. VL was initially supported by the FAPESP post-doctoral grant number 2015/077664. The authors also acknowledge the financial support from Coordenação de Aperfeiçoamento de Pessoal de Nível Superior - Brasil (CAPES) - Finance Code 001.

\bigskip

%\textcolor{red}{... statement of the main result ...} \textcolor{blue}{acho melhor listarmos os resultados principais depois de chegarmos a uma conclusão quanto a suavidade que estamos discutindo em 3.1. Inseri aqui os quiver como discutimos na última vídeo conferência. Troquei o "Corolário 3" por "Lemma 3". Não alterei o modo como foi enunciado por Marcos (?) na atualização feita na segunda-feira (Rodrigo)}\\

%According with the proof of Lemma \ref{lem4} a stable representation of a enhanced ADHM quiver always satisfies $G\equiv 0$. Therefore we can simplify the quiver \eqref{enhanced-ADHM-quiver} presented in the Introduction since we are interested in the moduli space of framed stable representations.\\ 
%In this paper it shall be understood that the \emph{enhanced ADHM quiver}  is the quiver $\mathcal{Q}$ given by
%\begin{equation}
 %\xymatrix@R-1.7em{
%&\mbox{\scriptsize$2$}&&\mbox{\scriptsize$1$}\\
%\bullet\ar@(ul,dl)_{b_{2}}\ar@(ur,ul)_{a_{2}}\ar[rr]^f&&\bullet\ar@(ur,dr)^{b_{1}}\ar@(ul,ur)^{a_{1}}\ar@/^2ex/[ddd]^j \\ \\ \\
%&&\bullet\ar@/^2ex/[uuu]^i
%&&&\mbox{\scriptsize$\infty$}
%}
%\end{equation}

%Fix a dimension vector $(r,c,c')\in\mathbb{N}^3$ with $n_1>n_2\geq1$, $r\geq1$. 

%%%%%%%%%%%%%%%%%%%%%%%%%%%%%%%%%%%%%%%%%%%%%%%%%%%%%%%%%%%%
%%%%%%%%%%%%%%%%%%%%%%%%%%%%%%%%%%%%%%%%%%%%%%%%%%%%%%%%%%%%

\section{Maximal stabilty chamber}\label{sec-stability}

In the proof of \cite[Lemma 4]{Ja4} it is claimed that a certain collection of data, namely $\widetilde{X}=(W,S,V',A|_S,B|_S,I,J|_S,0,0,0)$,
is a subrepresentation of the given representation $X$. This is not true unless $F(V')\subseteq S$. We present a refined version of \cite[Lemma 4]{Ja4}, Lemma \ref{lem4} below, which enlarges the open cone of $\mathbb{R}^2$ where we pick the stability parameters (see Figure \ref{chamber}). This new open cone is sharp, i.e., it is a chamber in the space of stability parameters (cf. Lemma \ref{lemma:chamber}).

We also noticed a typo in \cite[Definition 3]{Ja4} which might mislead the reader. We shall restate therefore that definition too, fixing the typo (Definition \ref{defn:stable} below).

\bigskip

%\subsection{Maximal stabilty chamber}\label{cor-lemma4} 
%\subsection{Correction of \cite[Lemma 4]{Ja4}}\label{cor-lemma4}

\begin{dfn}\label{defn:stable}
	\index{Representation of the enhanced ADHM quiver!$\Theta-$stable}\index{Representation of the enhanced ADHM quiver!$\Theta-$semistable}
	Let $\Theta=(\theta,\theta',\theta_{\infty})\in \mathbb{R}^3$ be a triple satisfying the relation
	\begin{equation}
	\label{eq:stability-parameter}
	c\theta + c'\theta' + r\theta_{\infty} = 0.
	\end{equation}
	A representation $X$ of numerical type $(r,c,c')$ is called \emph{$\Theta-$stable} if:
	\begin{itemize}
		\item[$(i)$] any subrepresentation $0\neq\widetilde{X}\subset X$ of numerical type $(0,\widetilde{c},\widetilde{c'})$ satisfies
		\begin{equation}
		\label{eq:thetastab01}
		\theta\widetilde{c}+\theta'\widetilde{c'}< 0;
		\end{equation}
		\item[$(ii)$] any subrepresentation $\widetilde{X}\subsetneq X$ of numerical type $(r,\widetilde{c},\widetilde{c'})$ satisfies
		\begin{equation}
		\label{eq:thetastab02}
		\theta_{\infty}r+\theta\widetilde{c}+\theta'\widetilde{c'}< 0.
		\end{equation}
	\end{itemize}
	A representation $X$ of numerical type $(r,c,c')$ is called \emph{$\Theta-$semistable} if:
	\begin{itemize}
		\item[$(iii)$] any subrepresentation $0\neq\widetilde{X}\subset X$ of numerical type $(0,\widetilde{c},\widetilde{c'})$ satisfies
		\begin{equation}
		\label{eq:thetastab03}
		\theta\widetilde{c}+\theta'\widetilde{c'}\leq 0;
		\end{equation}
		\item[$(iv)$] any subrepresentation $\widetilde{X}\subsetneq X$ of numerical type $(r,\widetilde{c},\widetilde{c'})$ satisfies
		\begin{equation}
		\label{eq:thetastab04}
		\theta_{\infty}r+\theta\widetilde{c}+\theta'\widetilde{c'}\leq 0.
		\end{equation}
	\end{itemize}\end{dfn}

\begin{lem}\label{lem4}
		Fix a triple $(r,c,c')\in(\mathbb{Z}_{>0})^3$. Suppose $\theta'>0$ and $\theta+\theta'<0$. Let $X=(W,V,V',A,B,I,J,A',B',F)$ be a representation of numerical type $(r,c,c')$. Then the following are equivalent:\begin{itemize}
			\item[$($i$)$] $X$ is $\Theta-$stable;
			\item[$($ii$)$] $X$ is $\Theta-$semistable;
			\item[$($iii$)$] $X$ satisfies the following conditions:
			\begin{enumerate}
				\item[$(S.1)$] $F\in Hom(V',V)$ is injective;
				\item[$(S.2)$] The ADHM data $\mathcal{A}=(W,V,A,B,I,J)$ is stable, i.e., there is no proper subspace $0\subset S\subsetneq V$ preserved by $A$, $B$ and containing the image of $I$.
			\end{enumerate}
		\end{itemize}
	\end{lem}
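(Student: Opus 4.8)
The plan is to establish the cycle of implications $(i)\Rightarrow(ii)\Rightarrow(iii)\Rightarrow(i)$. The first implication is immediate, since the strict inequalities \eqref{eq:thetastab01}--\eqref{eq:thetastab02} defining $\Theta$-stability imply the non-strict ones \eqref{eq:thetastab03}--\eqref{eq:thetastab04} defining $\Theta$-semistability. Throughout I would record that the hypotheses $\theta'>0$ and $\theta+\theta'<0$ force $\theta<-\theta'<0$, and I would use the defining relation \eqref{eq:stability-parameter} to rewrite $r\theta_\infty=-c\theta-c'\theta'$; thus for a subrepresentation of type $(r,\widetilde c,\widetilde c')$ the left-hand side of \eqref{eq:thetastab02} becomes $\theta(\widetilde c-c)+\theta'(\widetilde c'-c')$. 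The bookkeeping for subrepresentations is routine: a subrepresentation of type $(0,\widetilde c,\widetilde c')$ amounts to subspaces $\widetilde V\subseteq V$, $\widetilde V'\subseteq V'$ with $\widetilde V$ invariant under $A,B$, $J|_{\widetilde V}=0$, $F(\widetilde V')\subseteq\widetilde V$ and $\widetilde V'$ invariant under $A',B'$, while a subrepresentation of type $(r,\widetilde c,\widetilde c')$ additionally has $\widetilde W=W$, whence $\im I\subseteq\widetilde V$.

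For $(ii)\Rightarrow(iii)$ I would argue by contraposition, producing a destabilizing subrepresentation whenever $(S.1)$ or $(S.2)$ fails. If $F$ is not injective, relations $(R2)$--$(R3)$ show that $\ker F$ is invariant under $A',B'$, so $(0,0,\ker F)$ is a nonzero subrepresentation of type $(0,0,\widetilde c')$ with $\widetilde c'=\dim\ker F>0$; condition \eqref{eq:thetastab03} would then demand $\theta'\widetilde c'\le 0$, contradicting $\theta'>0$. If instead $(S.2)$ fails, I would pick an $A,B$-invariant subspace $0\subseteq S\subsetneq V$ with $\im I\subseteq S$ and set $\widetilde V=S$, $\widetilde V'=F^{-1}(S)$, $\widetilde W=W$; relations $(R2)$--$(R3)$ again guarantee that $F^{-1}(S)$ is $A',B'$-invariant, so this is a \emph{proper} subrepresentation of type $(r,\widetilde c,\widetilde c')$.

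The crux of this second case — and precisely the point where the original argument broke down — is the estimate of \eqref{eq:thetastab04} for this subrepresentation. Writing $a=\operatorname{codim}_V S>0$ and $b=\operatorname{codim}_{V'}F^{-1}(S)$, the map induced on quotients $V'/F^{-1}(S)\to V/S$ is injective, whence $b\le a$. Then $\theta<-\theta'$ together with $a>0$ gives $\theta a+\theta'b<-\theta'a+\theta'b=\theta'(b-a)\le 0$, so that $\theta(\widetilde c-c)+\theta'(\widetilde c'-c')=-(\theta a+\theta'b)>0$, violating \eqref{eq:thetastab04}. This is exactly the inequality one cannot secure by naively taking $\widetilde V'=V'$, which in general fails to be a subrepresentation; replacing $V'$ by $F^{-1}(S)$ and exploiting $b\le a$ is the whole content of the correction, and I expect this estimate to be the main obstacle.

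Finally, for $(iii)\Rightarrow(i)$ I would verify the two stability inequalities directly. For a nonzero subrepresentation of type $(0,\widetilde c,\widetilde c')$, injectivity of $F$ (from $(S.1)$) together with $F(\widetilde V')\subseteq\widetilde V$ gives $\widetilde c'\le\widetilde c$; since $\theta<0$ this yields $\theta\widetilde c+\theta'\widetilde c'\le(\theta+\theta')\widetilde c'$, which is negative when $\widetilde c'>0$ because $\theta+\theta'<0$, while if $\widetilde c'=0$ then $\widetilde c>0$ and $\theta\widetilde c<0$; either way \eqref{eq:thetastab01} holds. For a proper subrepresentation of type $(r,\widetilde c,\widetilde c')$ we have $\im I\subseteq\widetilde V$ with $\widetilde V$ invariant under $A,B$, so $(S.2)$ forces $\widetilde V=V$, i.e. $\widetilde c=c$; properness then forces $\widetilde c'<c'$, and the left-hand side of \eqref{eq:thetastab02} equals $\theta'(\widetilde c'-c')<0$ since $\theta'>0$. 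This closes the cycle.
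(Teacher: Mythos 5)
Your proof is correct and follows essentially the same route as the paper: the destabilizing subrepresentations $(0,0,\ker F)$ and $(W,S,F^{-1}(S))$ are exactly those used there, and your codimension estimate $b\le a$ via the injectivity of $V'/F^{-1}(S)\to V/S$ is just a contrapositive repackaging of the paper's computation $\widetilde c'=c'+\widetilde c-\dim(\im F+S)$ leading to $\widetilde c\ge c$. The verification of $(iii)\Rightarrow(i)$ likewise matches the paper's case analysis up to a trivially different way of bounding $\theta\widetilde c+\theta'\widetilde c'$.
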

\begin{proof}
If $X$ is $\Theta-$stable, then $X$ is clearly $\Theta-$semistable.\\ We assume $X$ is $\Theta$-semistable and we want to prove condition (S.1). We notice that
		\begin{align}
		A'(\ker(F))&\subset \ker(F) \nonumber\\
		B'(\ker(F))&\subset \ker(F)\,. \nonumber 
		\end{align}
		In fact, let $v\in\ker(F)$. Then it follows from the enhanced ADHM equations that
		\begin{align}
		0 & = (BF-FB')v \nonumber\\
	%	& = B\underbrace{Fv}_{=0} - FB'v \nonumber \\
		& \Rightarrow F(B'v) = 0\mbox{, for all } v\in\ker(F) \nonumber \\
		& \Rightarrow B'(v)\in\ker(F) \mbox{, for all }v\in\ker(F) \nonumber\\
		& \Rightarrow B'(\ker(F))\subset \ker(F)\,. \nonumber
		\end{align}
		Analogously, the same can be proved for the endomorphism $A'$.\\
		\indent Then, $\widetilde{X} = (0,0,\ker(F),0,0,0,0,A'|_{\ker(F)},B'|_{\ker(F)},F|_{\ker(F)})$ is a subrepresentation of $X$ of numerical type $(0,0,\dim(\ker(F)))$, so that
		\begin{align*}
		\theta\widetilde{c}+\theta'\widetilde{c'} = \theta'\cdot\dim(\ker(F)) \leq 0\,.
		\end{align*}
		As $\theta'>0$, this implies
		$$
		\dim(\ker(F))=0\,,
		$$
		i.e., $F$ is injective.\\
		\indent Now we assume again that $X$ is $\Theta-$semistable (in particular, $F$ is injective) and we want to prove condition $(S.2)$. Let $S \subseteq V$ such that
		\begin{equation*}
		A(S), B(S), \im(I)\subseteq S\,,
		\end{equation*}
		so that $\widetilde{X}=(W,S,F^{-1}(S))$ is a subrepresentation of numerical type $(r,\widetilde{c},\widetilde{c}')$, where
		\[
		  \widetilde{c}'=\dim(\im(F)\cap S)=c'+\widetilde{c}-\dim(\im(F)+S)\,.
		\]
		One has
		\[
		 \theta\widetilde{c}+\theta'\widetilde{c}'=\theta\widetilde{c}+\theta'(c'+\widetilde{c}-\dim(\im(F)+S))\leq \theta c+\theta' c'\,,
		\]
				which implies
		\[
		 (\theta+\theta')\,\widetilde{c}\leq\theta c+\theta'\dim(\im(F)+S)\leq (\theta+\theta')\,c\,.
		\]
		Hence, as $\theta+\theta'<0$, one gets
		\[
		 \widetilde{c}\geq c\,,
		\]
		and so $S=V$, as wanted.
		
		To prove (iii) $\Rightarrow$ (i), let $X$ be a representation satisfying conditions (S.1) and (S.2). We want to prove that $X$ is $\Theta$-stable. We first consider nontrivial subrepresentations of the form $(0,S,S')$ and numerical type $(0,\widetilde{c},\widetilde{c}')$, with $S\subseteq \ker (J)$. We notice immediately that the injectivity of $F$ implies $\widetilde{c}\geq 1$ (if $S=0$, also $S'=0$ and $X$ would be trivial), and $\widetilde{c}'\leq\widetilde{c}$. Hence, one has
		\[
		 \theta\widetilde{c}+\theta'\widetilde{c}'\leq(\theta+\theta')\widetilde{c}< 0\,,
		\]
		as wanted.
		
		Finally, we consider proper subrepresentations of the form $(W,S,S')$ and numerical type $(r,\widetilde{c},\widetilde{c}')$, with $S\supseteq\im(I)$. By condition (S.2), $\widetilde{c}=c$, so that $\widetilde{c}'<c'$ (in order for $X$ to be proper). One has
		\[
		  \theta\widetilde{c}+\theta'\widetilde{c}'=\theta c+\theta'\widetilde{c}'<\theta c+\theta'c'\,,
		\]
		as wanted.
\end{proof}

In the plane $\mathbb{R}^2_{\theta,\theta'}$ we focused so far on the open cone
\[
 \Delta=\{\theta'>0\,,\ \theta<-\theta'\}\, ,
\]
which is pictured in Figure \ref{chamber}.

\begin{figure}
    \centering
    \includegraphics{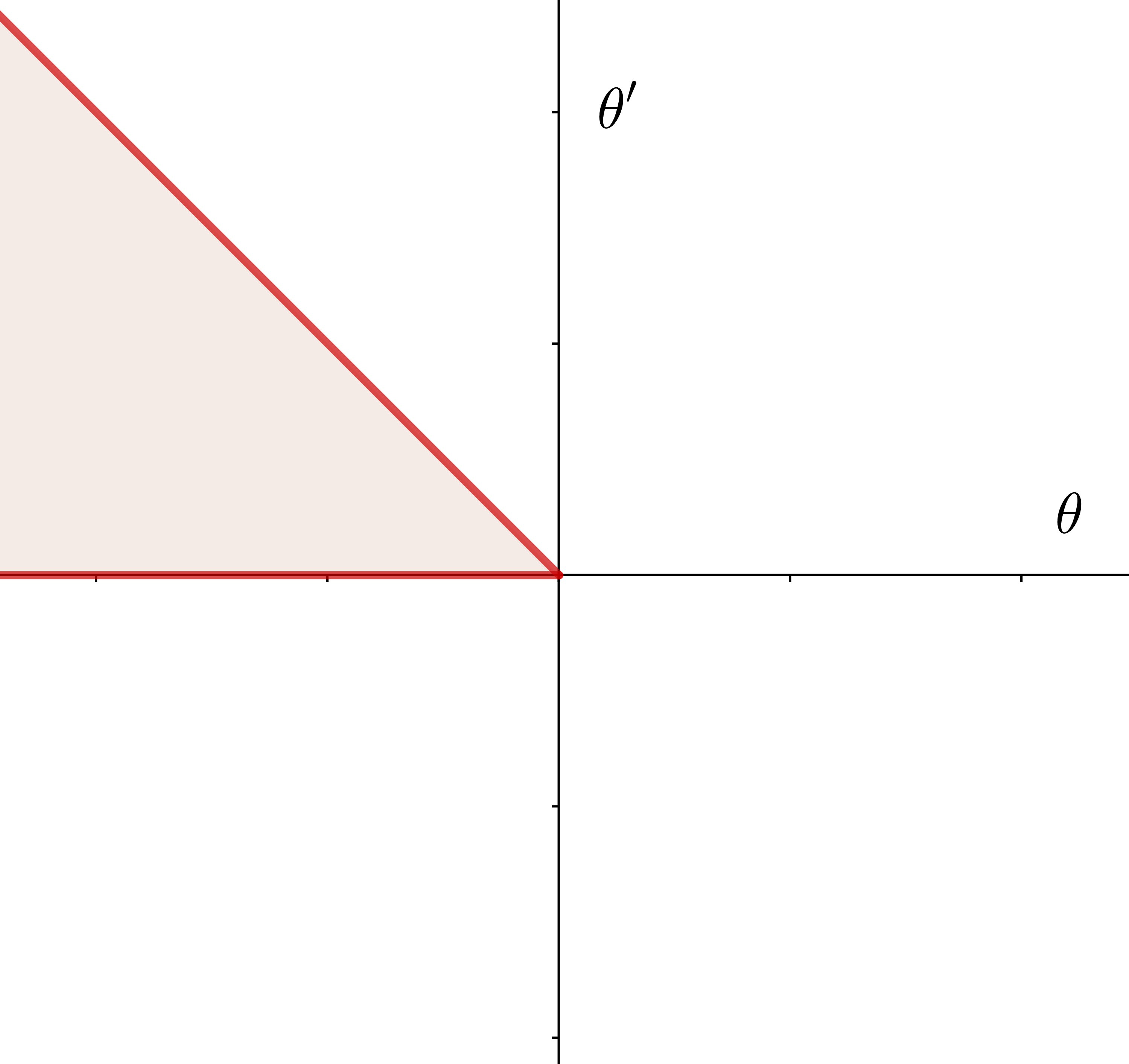}
    \caption{Stability chamber in the $(\theta,\theta')$-plane.}
    \label{chamber}
\end{figure}

One can wonder if this is indeed a chamber in the space of parameters. To understand it, we need to investigate separately what happens on the open rays
\[
  \rho^-=\{\theta'=0\,,\ \theta<0\}\quad\text{and}\quad\rho^+=\{\theta'>0\,,\ \theta=-\theta'\}\,.
\]
Note that if $\Theta_1,\Theta_2\in\rho^-$, then an object is $\Theta_1$-semistable if and only if it is $\Theta_2$-semistable; a similar fact holds along $\rho^+$. With this in mind, we shall denote generically by $\Theta^-$ and $\Theta^+$ any stability parameter belonging to $\rho^-$ and $\rho^+$, respectively.

\begin{lem}\label{lemma:chamber}
There exist strictly $\Theta^-$- and $\Theta^+$-semistable representations of numerical type $(r,c,c')$.
\end{lem}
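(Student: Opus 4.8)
The plan is to exhibit a single representation $X$ that is semistable but not stable along \emph{each} of the two bounding rays, thereby witnessing both halves of the statement simultaneously. The conceptual point I would stress is that every representation of numerical type $(r,c,c')$ carries two canonical subrepresentations whose invariants are tied precisely to the two walls. First, $\widetilde{X}_0=(W,V,0)$ is always a subrepresentation, of numerical type $(r,c,0)$. Second --- and here the enhanced ADHM equations are essential --- relations (R2), (R3) and (R4) give $A(F(V'))=F(A'(V'))\subseteq F(V')$, $B(F(V'))\subseteq F(V')$ and $J(F(V'))=(JF)(V')=0$, so $F(V')$ is $A,B$-invariant and annihilated by $J$; hence $\widetilde{X}_\infty=(0,F(V'),V')$ is a subrepresentation, of numerical type $(0,\dim F(V'),c')$.

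Next I would compute the quantity $\mu(\widetilde{X}):=\theta_\infty\widetilde{r}+\theta\widetilde{c}+\theta'\widetilde{c}'$ governing Definition \ref{defn:stable} on these two subobjects. Using the constraint \eqref{eq:stability-parameter}, one finds $\mu(\widetilde{X}_0)=-c'\theta'$ and, when $F$ is injective, $\mu(\widetilde{X}_\infty)=(\theta+\theta')c'$. Both are strictly negative throughout $\Delta$, in agreement with Lemma \ref{lem4}, whereas $\mu(\widetilde{X}_0)$ vanishes exactly on $\rho^-$ and $\mu(\widetilde{X}_\infty)$ vanishes exactly on $\rho^+$. This already pinpoints the candidate destabilizing subobject on each wall, and since $r\ge1$ and $c'\ge1$ both $\widetilde{X}_0$ and $\widetilde{X}_\infty$ are proper and nonzero.

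I would then take $X$ to satisfy conditions $(S.1)$ and $(S.2)$; such representations exist whenever $c\ge c'$ (the range relevant to flags, where $c=n+l$ and $c'=l$), and one can write one down explicitly, e.g. with $V=\mathbb{C}^c$, $A$ a regular nilpotent Jordan block, $B=0$, $J=0$, $I$ hitting a cyclic vector, and $V'$ the $A$-invariant coordinate subspace of dimension $c'$, with $F$ its inclusion, $A'=A|_{V'}$ and $B'=0$. By Lemma \ref{lem4}, $X$ is $\Theta$-stable for $\Theta\in\Delta$. The substance of the argument is to verify that $X$ is still \emph{semistable} on the closed rays, by rerunning the two computations of Lemma \ref{lem4} at the boundary: $(S.2)$ forces $\widetilde{c}=c$ for every proper subrepresentation of type $(r,\widetilde{c},\widetilde{c}')$, whence by \eqref{eq:stability-parameter} its $\mu$ equals $\theta'(\widetilde{c}'-c')$, which is $0$ on $\rho^-$ and negative on $\rho^+$; while on $\rho^+$ the injectivity $(S.1)$ forces $F(S')\subseteq S$ and hence $\widetilde{c}'\le\widetilde{c}$ for every nonzero subrepresentation of type $(0,\widetilde{c},\widetilde{c}')$, giving $\mu=\theta'(\widetilde{c}'-\widetilde{c})\le0$ (the corresponding bound on $\rho^-$ being automatic since $\theta<0$, $\theta'=0$). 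Thus $X$ is both $\Theta^-$- and $\Theta^+$-semistable, and by the previous paragraph $\widetilde{X}_0$ realizes equality in \eqref{eq:thetastab04} on $\rho^-$, while $\widetilde{X}_\infty$ realizes equality in \eqref{eq:thetastab03} on $\rho^+$; so $X$ is stable on neither ray.

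The step I expect to be the real obstacle is precisely this boundary semistability check: confirming that, once one sits exactly on a wall, no subrepresentation \emph{other} than the two canonical ones can push $\mu$ strictly positive. This is exactly where the inequalities in Lemma \ref{lem4} were strict thanks to $\theta+\theta'<0$; on $\rho^\pm$ one of those strict inequalities degenerates to an equality, and I must check that the remaining ones still close using only $(S.1)$ and $(S.2)$. A secondary issue to flag is the hypothesis $c\ge c'$: if $c<c'$ then $F$ cannot be injective, the subrepresentation $(0,0,\ker F)$ has $\mu=\theta'\dim\ker F>0$ on $\rho^+$, and no $\Theta^+$-semistable representation exists at all; hence the statement is to be read in the range $c\ge c'$, noting that for $\rho^-$ alone the construction survives for every $(r,c,c')$ upon simply setting $F=0$.
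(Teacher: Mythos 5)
Your proof is correct, but it follows a genuinely different route from the paper's. The paper constructs two separate explicit witnesses: $X^-=(W,V,V',\bar A,\bar A,\bar I,0,0,0,0)$ with $F=0$ (which works for every $(r,c,c')$) and $X^+$ with $\bar F$ injective, and checks the wall-specific reformulations of (semi)stability ((R.1s)--(R.2) on $\rho^-$, (T.1s)--(T.2) on $\rho^+$) directly on each, exhibiting $(0,0,V')$ and $(0,\im\bar F,V')$ as the respective destabilizers. You instead take a single representation satisfying $(S.1)$--$(S.2)$, rerun the two estimates of Lemma \ref{lem4} on the closed rays to get semistability there, and observe that the two destabilizers are in fact \emph{canonical}: $(W,V,0)$ sits inside every representation with $\mu=-c'\theta'$, and $(0,F(V'),V')$ sits inside every representation with injective $F$ with $\mu=(\theta+\theta')c'$ (your $\widetilde X_\infty$ coincides with the paper's destabilizer for $X^+$). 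This buys you slightly more than the lemma asks: it shows there are \emph{no} stable objects at all on either ray (on $\rho^+$ because semistability already forces $F$ injective via $(0,0,\ker F)$), so both rays are genuine walls, whereas the paper only produces one strictly semistable object on each. The trade-offs are minor: your single-witness construction needs $c\ge c'$ for both rays at once (you correctly flag this, and correctly note the $F=0$ fallback for $\rho^-$); the paper's $\rho^+$ construction has the same implicit restriction, acknowledged only by the phrase ``for such invariants''. Your boundary semistability check is complete — the only subrepresentations that need to be considered are those with $\widetilde W\in\{0,W\}$, exactly as in Definition \ref{defn:stable} and in the proof of Lemma \ref{lem4} — and the verification that your explicit Jordan-block example satisfies the relations \textbf{(R1)}--\textbf{(R5)} and conditions $(S.1)$--$(S.2)$ goes through.
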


\begin{proof}
By inspecting Definition \ref{defn:stable} one can easily see that a representation $X$ is $\Theta^-$-stable if it satisfies the conditions:
\begin{itemize}
    \item[(R.1s)] for any subrepresentation of numerical type $(0,\widetilde{c},\widetilde{c}')$, one has $\widetilde{c}>0$ (unless $\widetilde{c}=\widetilde{c}'=0$);
   \item[(R.2s)] for any subrepresentation of numerical type  $(r,\widetilde{c},\widetilde{c}')$, one has $\widetilde{c}=c$ and $\widetilde{c}'=c'$.  
\end{itemize}

A representation $X$ is $\Theta^-$-semistable if it satisfies instead the sole condition:
\begin{itemize}
    \item[(R.2)] for any subrepresentation of numerical type $(r,\widetilde{c},\widetilde{c}')$, one has $\widetilde{c}=c$.  
\end{itemize}

We consider the representation $X^-=(W,V,V',\bar{A},\bar{A},\bar{I},0,0,0,0)$, where, after fixing a basis $e_1,\dots,e_c$ of $V$, the linear morphisms $\bar{A}$ and $\bar{I}$ are defined as
\begin{equation}\label{eq:es}
 \bar{A}=\left(
\begin{array}{c|c}
0 & 0\\
\mathbf{1}_{c-1} & 0
\end{array}
\right)\quad\text{and}\quad\bar{I}=
\left(
\begin{array}{c|c}
1 & 0\\
0 & \\
\vdots&\mathbf{0}_{(c-1)\times(r-1)}\\
0&
\end{array}
\right)\,.
\end{equation}
We claim that $X^-$ is strictly $\Theta^-$-semistable, so that $\rho^-$ is indeed a wall. To verify first that $X^-$ is $\Theta^-$-semistable we have to consider subrepresentations of the form $\widetilde{X}=(W,S,S')$ with $S\supseteq\im(\bar{I})=\langle e_1\rangle$, and prove that $S=V$. But this is readily checked once we notice that
\[
 e_i=\bar{A}^{i-1}e_1\in S\quad\text{for $i=1,\dots,c$.}
\]
However, $X^-$ is not $\Theta^-$-stable, as for instance the subrepresentation $\widetilde{X}=(0,0,V')$ violates condition (R.1s).

By inspecting again \cite[Definition 3]{Ja4} one can easily see that a representation $X$ is $\Theta^+$-stable if it satisfies the conditions:
\begin{itemize}
    \item[(T.1s)] for any subrepresentation of numerical type $(0,\widetilde{c},\widetilde{c}')$, one has $\widetilde{c}'<\widetilde{c}$ (unless $\widetilde{c}=\widetilde{c}'=0$);
    \item[(T.2s)] for any subrepresentation of numerical type $(r,\widetilde{c},\widetilde{c}')$, one has $\widetilde{c}'<\widetilde{c}+c'-c$ (unless $\widetilde{c}=c$ and $\widetilde{c}'=c'$).  
\end{itemize}
A representation $X$ is $\Theta^+$-semistable if it satisfies instead the conditions:
\begin{itemize}
    \item[(T.1)] for any subrepresentation of numerical type $(0,\widetilde{c},\widetilde{c}')$, one has $\widetilde{c}'\leq \widetilde{c}$\,;  
    \item[(T.2)] for any subrepresentation of numerical type $(r,\widetilde{c},\widetilde{c}')$, one has $\widetilde{c}'\leq\widetilde{c}+c'-c$.  
\end{itemize}

We consider the representation $X^+=(W,V,V',\bar{A},\bar{A},\bar{I},0,\bar{A}',\bar{A}',\bar{F})$, where, after fixing bases $e_1,\dots,e_c$ and $e'_1,\dots,e_{c'}$ of $V$ and $V'$, respectively, the linear morphisms $\bar{A}$ and $\bar{I}$ are defined as in eq.~\eqref{eq:es}, while
\[
 \bar{A}'=\left(
\begin{array}{c|c}
0 & 0\\
\mathbf{1}_{c'-1} & 0
\end{array}
\right)\quad\text{and}\quad
 \bar{F}=
 \begin{pmatrix}
   \mathbf{0}_{(c-c')\times c'}\\\mathbf{1}_{c'}
 \end{pmatrix}\,.
\]
We claim that $X^+$ is strictly $\Theta^+$-semistable, so that for such invariants $\rho^+$ is indeed a wall. Condition (T.1) holds true as $\bar{F}$ is injective; as for condition (T.2) one can arguing as already done for $X^-$ (notice that condition (R.2) is stronger than condition (T.2)). However, $X^+$ is not $\Theta^+$-stable, as for instance the subrepresentation $(0,\langle e_{c-c'+1},\dots,e_c\rangle,V')$ violates condition (T.1s).
\end{proof}

\begin{obs}
 It is immediate to check that the possible presence of the map $G$ (cf. Introduction) does not affect any of the arguments above, so that the stability chamber in Figure \ref{chamber} is indeed a stability chamber for the (original) enhanced ADHM quiver.
\end{obs}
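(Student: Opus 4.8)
The plan is to run through the two proofs above and check that reinstating the arrow $G$ leaves every step intact, the point being that both arguments are essentially insensitive to it. First I would record the two structural facts that make this automatic. A subrepresentation of an enhanced ADHM representation (now carrying the extra datum $G$) is still a triple of subspaces $(\widetilde{W},S,S')$ stable under all the structure maps, and its numerical type records only the dimensions $(\dim\widetilde{W},\dim S,\dim S')$; hence neither the notion of numerical type nor the stability inequalities of Definition \ref{defn:stable} are affected by $G$. The sole effect of reinstating $G$ is to impose one further compatibility condition on a triple $(\widetilde{W},S,S')$ for it to qualify as a subrepresentation, so that the collection of subrepresentations of a given representation can only shrink.

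This observation immediately settles the implication (iii) $\Rightarrow$ (i) in Lemma \ref{lem4}: there we bounded \emph{every} triple $(\widetilde{W},S,S')$ stable under $A,B,I,J,F$ and showed it satisfies the relevant strict inequality, and since every subrepresentation of the enhanced representation is in particular such a triple, the same strict inequalities persist, so conditions (S.1)--(S.2) still force $\Theta$-stability. The same remark applies to the semistability half of Lemma \ref{lemma:chamber}: the representations $X^-$ and $X^+$ are taken exactly as in \eqref{eq:es} with the $G$-component set to $0$, and since $G=0$ is compatible with every choice of subspaces, their subrepresentations coincide with those already computed, so the semistability inequalities verified for all of them continue to hold.

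The implications (ii) $\Rightarrow$ (S.1) and (ii) $\Rightarrow$ (S.2) are the only ones where a subrepresentation is \emph{used} rather than merely bounded, so here I must check that the two destabilizing triples remain subrepresentations once $G$ is present. For (ii) $\Rightarrow$ (S.1) the triple is $\widetilde{X}=(0,0,\ker F)$, whose $V$-component is $0$, so the extra condition coming from $G$ is vacuous and $\widetilde{X}$ is still a subrepresentation. For (ii) $\Rightarrow$ (S.2) the triple is $\widetilde{X}=(W,S,F^{-1}(S))$, and one must verify that $F^{-1}(S)$ is compatible with $G$; this follows from the enhanced relations satisfied by $G$ in exactly the same manner in which relations (R2)--(R3) were used above to check that $A'$ and $B'$ preserve $\ker F$ and $F^{-1}(S)$. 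With both triples confirmed to be subrepresentations, semistability applies to them verbatim, their numerical types are unchanged, and the conclusions ($F$ injective and $S=V$) follow as before. Likewise, the strictly semistable witnesses of Lemma \ref{lemma:chamber}, namely the subrepresentation $(0,0,V')$ of $X^-$ and the subrepresentation $(0,\langle e_{c-c'+1},\dots,e_c\rangle,V')$ of $X^+$, have vanishing $G$-component and so remain subrepresentations, showing that $X^\pm$ fail to be stable.

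The single step calling for an actual computation is the $G$-compatibility of $F^{-1}(S)$ in (ii) $\Rightarrow$ (S.2); I expect this to be the main—and only mild—obstacle, and it is dispatched by the relations involving $G$ precisely as in the $F$-case. Everything else reduces to observing that the arguments never use more about a subrepresentation than its numerical type together with its $A,B,I,J,F$-structure. This confirms that the open cone $\Delta$ of Figure \ref{chamber} is a chamber, and that $\rho^\pm$ are walls, for the original enhanced ADHM quiver.
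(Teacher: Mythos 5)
Your overall strategy is the right one---and, since the paper offers no argument beyond ``immediate to check,'' it is essentially the only available one: observe that reinstating $G$ merely adds one more closure condition to the definition of subrepresentation, so that every ``for all subrepresentations'' estimate survives automatically, and then re-examine the two places in Lemma \ref{lem4} and Lemma \ref{lemma:chamber} where a \emph{specific} subrepresentation is exhibited. Your treatment of $(0,0,\ker F)$, of $(0,0,V')$, of $(0,\langle e_{c-c'+1},\dots,e_c\rangle,V')$, and of the semistability of $X^{\pm}$ (where one appends $G=0$, so all $G$-relations hold trivially and the subrepresentation lattice is unchanged) is correct.

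The gap sits exactly at the step you defer: the claim that $G(S)\subseteq F^{-1}(S)$ ``follows from the enhanced relations satisfied by $G$ in exactly the same manner'' as the $F$-case. It does not. The relations of the original quiver that involve $G$ are of intertwining type ($A'G-GA=0$, $B'G-GB=0$, $GI=0$), whereas the inclusion you need, namely $FG(S)\subseteq S$, concerns the \emph{composite} $FG$, and no intertwining argument produces it. (It would be immediate only if the original quiver imposed the composite relation $\phi\psi=0$, i.e.\ $FG=0$, in which case $G(S)\subseteq\ker F\subseteq F^{-1}(S)$ for reasons entirely different from the ones you invoke.) The clean repair, using only $GI=0$ and the intertwining relations, is to replace $S$ by $S\cap\ker G$: this subspace still contains $\im(I)$, is still $A,B$-invariant, and $G$ maps it to $0\subseteq F^{-1}(S\cap\ker G)$, so $(W,\,S\cap\ker G,\,F^{-1}(S\cap\ker G))$ is a genuine subrepresentation even in the presence of $G$; the same dimension count then forces $S\cap\ker G=V$, which yields both $S=V$ and, as a byproduct, $G=0$, after which all remaining arguments literally coincide with the $G$-free ones. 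With this substitution your verification closes.
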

\bigskip

%%%%%%%%%%%%%%%%%%%%%%%%%%%%%%%%%%%%%%%%%%%%%%%%%%%%%%%%%%%%
%%%%%%%%%%%%%%%%%%%%%%%%%%%%%%%%%%%%%%%%%%%%%%%%%%%%%%%%%%%%

\section{Fibration over the moduli space of torsion-free sheaves on $\p2$}\label{cor-lemma13}

A different issue concerns the proof of \cite[Lemma 13]{Ja4}: although the result per se stands true, the matrix-theoretical proof provided there presents a gap which we were not able to bypass. As a consequence, the validity of both \cite[Lemma 14]{Ja4} (connectedness of $\mathcal{N}(r,c,1)$) and \cite[Corollary 22]{Ja4} (irreducibility of $\mathcal{F}(r,c,1)$) is compromised.

With any point \[[(W,V,V',A,B,I,J,A',B',F)]\in\mathcal{N}(r,c,c')\,,\] one can associate a representation $(W,V'',A'',B'',I'',J'')$ of the usual ADHM quiver as follows: we set $V''=V/im(F)$, and we denote by $\pi''$ the canonical projection $V\longrightarrow V''$; after that, we let $I''$ be the composition $\pi''\circ I$; finally, we notice that, thanks to the relations \textbf{(R2)}-\textbf{(R4)}, the linear maps $A,B$ and $J$ induce maps $A'',B''\in End(V'')$, and $J''\in Hom(V'',W)$, respectively. One can check that this is indeed a representation of the ADHM quiver, i.e., it satisfies the ADHM equation, and that it is actually stable (see \cite[page 148]{Ja4} for details). Therefore, one has a well-defined morphism
\begin{equation}\label{eq:q}
 \mathfrak{q}\colon \mathcal{N}(r,c,c') \longrightarrow\mathcal{M}(r,c-c')\,,
\end{equation}
where $\mathcal{M}(r,c-c')$ is the moduli space of the stable representations of the ADHM quiver of numerical type $(r,c-c')$.

\begin{lem}$($\cite[Lemma 13]{Ja4}$)$
The morphism $\mathfrak{q}$ in eq.~\eqref{eq:q} is surjective. 
\end{lem}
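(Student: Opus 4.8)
The plan is to exhibit an explicit section of $\mathfrak{q}$, or at least to show that every stable ADHM datum of type $(r,c-c')$ lies in the image. Given a stable representation $(W,V'',A'',B'',I'',J'')\in\mathcal{M}(r,c-c')$, I want to build an enhanced ADHM datum $X=(W,V,V',A,B,I,J,A',B',F)\in\mathcal{N}(r,c,c')$ whose associated quotient representation, via the construction preceding the lemma, recovers the one we started with. The most natural attempt is to take $V=V''\oplus V'$ with $V'\cong\mathbb{C}^{c'}$, let $F\colon V'\hookrightarrow V$ be the inclusion of the second summand (so that $\im(F)=V'$ and $V/\im(F)\cong V''$ canonically), and then lift $A'',B'',I'',J''$ to maps on $V$ that respect this splitting.

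The key steps, in order, would be as follows. First I would choose $A',B'\in\End(V')$ subject only to relation \textbf{(R5)}, i.e. $[A',B']=0$; the simplest choice is $A'=B'=0$, or more generally any commuting pair, and this is the freedom that makes surjectivity plausible for all $c'\ge 1$. Second, I would define $A,B\in\End(V)$ in block form with respect to $V=V''\oplus V'$, placing $A''$ and $A'$ on the diagonal blocks and similarly for $B$, and I would set the off-diagonal blocks from $V''$ to $V'$ equal to zero so that $\im(F)=V'$ is preserved and the induced maps on the quotient are exactly $A'',B''$. Third, I would set $I$ to be $I''$ followed by the inclusion $V''\hookrightarrow V$ (so $\pi''\circ I=I''$) and choose $J\colon V\to W$ to restrict to $J''$ on $V''$ and to \emph{vanish} on $V'=\im(F)$, which is forced by relation \textbf{(R4)}, $JF=0$. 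Fourth, I would verify that the enhanced ADHM equations \textbf{(R1)}--\textbf{(R5)} hold: \textbf{(R2)}, \textbf{(R3)} reduce to the compatibility of the diagonal blocks with $F$ being the inclusion (automatic once $A,B$ are block-diagonal with the $V'$-blocks equal to $A',B'$), \textbf{(R4)} holds by the choice $J|_{V'}=0$, \textbf{(R5)} is the commuting condition on $A',B'$, and \textbf{(R1)}, $[A,B]+IJ=0$, must be checked block by block, reducing on the $V''$-block to the ADHM equation satisfied by $(A'',B'',I'',J'')$ and on the $V'$-block and cross terms to $[A',B']=0$ together with the vanishing of the relevant products. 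Finally, I would invoke Lemma \ref{lem4}: to land in $\mathcal{N}(r,c,c')$ I need the constructed $X$ to be stable, and by that lemma (for parameters in the chamber $\Delta$) this amounts to $F$ being injective, which holds since $F$ is an inclusion, together with stability (S.2) of the underlying ADHM data $(W,V,A,B,I,J)$.

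The main obstacle I anticipate is precisely the last point: condition (S.2) for the \emph{enlarged} datum on $V=V''\oplus V'$. The ADHM data $(W,V'',A'',B'',I'',J'')$ is stable by hypothesis, so there is no proper $A'',B''$-invariant subspace of $V''$ containing $\im(I'')$; but I must rule out proper $A,B$-invariant subspaces $S\subseteq V$ containing $\im(I)\subseteq V''$ in the larger space. If $A'=B'=0$, then $V'$ is a subspace annihilated by $A,B$ that does not meet $\im(I)$, and one must check that no invariant $S$ can swallow part of $V'$ and remain proper while containing $\im(I)$; the clean way is to observe that the quotient by any such $S$ maps onto a quotient of $V''$ by an $A'',B''$-invariant subspace containing $\im(I'')$, forcing $S\supseteq V''$, and then to analyze the induced action on $V'$. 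Here the choice of $A',B'$ is decisive: a careless choice can destroy stability, so I would either argue that a generic or suitably \emph{cyclic} choice of $(A',B')$ (e.g. with $A'$ a single nilpotent Jordan block and $\im(I)$ together with $F$ generating all of $V$) yields a stable $X$, or more robustly modify $I$ to have a component into $V'$ chosen so that $\im(I)$ generates $V'$ under $A,B$. Arranging this compatibly with \textbf{(R1)} and \textbf{(R4)} is the delicate bookkeeping; once a single stable lift is produced for each point of $\mathcal{M}(r,c-c')$, surjectivity of $\mathfrak{q}$ follows immediately, since by construction $\mathfrak{q}([X])$ is the class of the datum we started from.
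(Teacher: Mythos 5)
Your strategy is essentially the original, explicit-lift proof of \cite[Lemma 13]{Ja4}, and it founders exactly where that proof did: on the stability of the constructed datum. Concretely, if you take $V=V''\oplus V'$ with $F$ the inclusion of $V'$ and make $A,B$ block-triangular with \emph{zero} off-diagonal blocks and $\im(I)\subseteq V''$, then $V''$ itself (or, in the convention of eq.~\eqref{eq:surj}, the complement of $\im(F)$) is an $(A,B)$-invariant proper subspace containing $\im(I)$, so $(S.2)$ of Lemma \ref{lem4} fails and the lift is unstable --- you acknowledge this. But the repair you sketch is not carried out, and it is not routine: once you allow nonzero blocks $\widetilde{A},\widetilde{B},\widetilde{I}$, relation \textbf{(R1)} imposes the system \eqref{eq:lem-enhADHMeqobt02} (in particular $\widetilde{I}J''$ enters, which is a genuine constraint for $r>1$), and one must exhibit, for \emph{every} stable $(W,V'',A'',B'',I'',J'')$, a solution of that system whose total datum is stable. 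The intermediate claim you call the ``clean way'' is also false as stated: $\pi''(S)=V''$ only gives $S+\im(F)=V$, not $S\supseteq V''$, since $S$ can be a graph-type subspace; this is precisely why the necessary conditions $(i)$--$(ii)$ listed after eq.~\eqref{eq:surj} are not sufficient. So the decisive step --- producing a stable lift compatible with \textbf{(R1)}--\textbf{(R5)} over an arbitrary base point --- is missing, and it is the step that could not be completed in general.

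For comparison, the proof adopted here avoids the explicit construction entirely: it uses the isomorphism $\mathfrak{b}\colon\mathcal{N}(r,n+l,l)\to\mathcal{F}(r,n,l)$ and the surjectivity of the forgetful map $\mathfrak{p}\colon\mathcal{F}(r,n,l)\to\mathcal{M}(r,n)$ on the sheaf-theoretic side (every framed torsion-free sheaf admits a subsheaf with quotient of prescribed finite length away from $\ell_\infty$), so that $\mathfrak{b}^{-1}\circ\mathfrak{p}'$ is a right inverse of $\mathfrak{q}$. If you want to salvage your approach, you would need either an explicit stable solution of \eqref{eq:lem-enhADHMeqobt02} valid for all invariants, or a genericity/dimension argument showing that stable solutions exist in every fiber; neither is supplied in your sketch.
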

\begin{proof}[New proof]
It follows from \cite[Lemma 2]{Ja4} and \cite[Lemma 18]{Ja4} that there is a commutative diagram of schemes
$$ \xymatrix{
\mathcal{N}(r,n+l,l) \ar[rr]^{\mathfrak{b}}_{\sim} \ar[dr]^{\mathfrak{q}} & & \mathcal{F}(r,n,l) \ar[ld]^{\mathfrak{p}} \\
& \mathcal{M}(r,n) &
} $$
where $\mathfrak{p}$ is surjective and $\mathfrak{b}$ bijective. Let $\mathfrak{p}'$ be a right inverse of $\mathfrak{p}$. It is easy to check that \[\mathfrak{q'}:=\mathfrak{b}^{-1}\circ \mathfrak{p}'\] is a right inverse of $\mathfrak{q}$. In other words, $\mathfrak{q}$ is surjective.
\end{proof}

The original proof of \cite[Lemma 13]{Ja4} was actually saying more; namely, it was providing an explicit procedure to concoct an element of the fiber of $\mathfrak{q}$ over an arbitrary point $[(W,V'',A'',B'',I'',J'')]\in\mathcal{M}(r,c-c')$. The idea was to consider the vector space $V=\mathbb{C}^{c'}\oplus V''$, and, after choosing bases for $V''$ and $W$, search for linear maps of the form
\begin{equation}\label{eq:surj}
\begin{gathered}
		A=\begin{pmatrix}
		 	A' & \widetilde{A} \\
		0  & A''
		\end{pmatrix},\quad B=\begin{pmatrix}
		B' & \widetilde{B}\\
		0  & B''
	\end{pmatrix},\\
	    I =\begin{pmatrix}
		\widetilde{I}\\
		I''
		\end{pmatrix},\quad J =\begin{pmatrix}
		0 & J''
	\end{pmatrix},\quad
	F = \begin{pmatrix}
         \mathbf{1}_{c'}\\
         0
        \end{pmatrix}
\end{gathered}
\end{equation}
For such a collection of data, the set of relations \textbf{(R1)}-\textbf{(R5)} turns out to be equivalent to the system
\begin{equation} \label{eq:lem-enhADHMeqobt02}
	\left\{\begin{array}{l}
	[A',B'] = 0 \\
	A'\widetilde{B}+ \widetilde{A}B''-B'\widetilde{A}-\widetilde{B}A''+\widetilde{I}J'' = 0\end{array}\right. .
\end{equation}
As for stability, it was incorrectly stated in the proof of \cite[Lemma 13]{Ja4} (see \emph{op.cit.}, page 149, after eq.~(32)) that the linear data in eq.~\eqref{eq:surj} are stable if and only if:
\begin{itemize}\label{decomp_stable}
\item[$(i)$] at least one of the maps $\widetilde{A}$, $\widetilde{B}$ and $\widetilde{I}$ is nontrivial;
\item[$(ii)$] there is no proper subspace $S'\subsetneq V'$ such that
\begin{align*}\label{prop:minilemmainside}
\widetilde{A}(V''),\quad \widetilde{B}(V''),\quad \widetilde{I}(W)\subset S'\quad \mbox{and}\quad A'(S'), B'(S')\subset S'.
\end{align*}
\end{itemize}
Actually, these two conditions happen to be only necessary, as it is possible to build examples of unstable representations that satisfy them.

Whilst we do believe that stable solutions for the system in eq.~\eqref{eq:lem-enhADHMeqobt02} can be found explicitly, our attempts showed a high computational complexity as soon as the invariants grow; in particular, we are not able to preserve the structural simplicity of the solutions in \cite{Ja4}, which was pivotal to the subsequent proofs of \cite[Lemma 14]{Ja4} and \cite[Corollary 22]{Ja4}.

%%%%%%%%%%%%%%%%%%%%%%%%%%%%%%%%%%%%%%%%%%%%%%%%%%%%%%%%%%%%
%%%%%%%%%%%%%%%%%%%%%%%%%%%%%%%%%%%%%%%%%%%%%%%%%%%%%%%%%%%%

\section{Obstruction theory} \label{sec:obst}

In \cite[\S 5]{Ja4} it is claimed that the moduli spaces $\mathcal{N}(r,c,1)$ are smooth. The proof, however, contains a flaw which we could not circumvented (we also suggest the reader to compare also the published and current arXiv versions of \cite[\S3.3]{enhADHM}).

We start by recovering the main steps (with few technical changes) to get the more general result expressed by \cite[Theorem 15]{Ja4}, i.e., that the moduli spaces $\mathcal{N}(r,c,c')$ have a perfect obstruction theory.

Given the representation $X=(W,V,V',A,B,A',B',F,I,J)$ we associate with the class $[X]$ the deformation complex $\mathcal{C}(X)$:
\begin{equation}\label{eq:cotan}
 \begin{array}{c}\End(V)\\\oplus\\\End(V')\end{array}\stackrel{d_0}{\longrightarrow}
 \begin{array}{c}\End(V)^{\oplus 2}\\\oplus\\\Hom(W,V)\\\oplus\\\Hom(V,W)\\\oplus\\\End(V')^{\oplus 2}\\\oplus\\\Hom(V',V)\end{array}
 \stackrel{d_1}{\longrightarrow}
 \begin{array}{c}\End(V)\\\oplus\\\Hom(V',V)^{\oplus 2}\\\oplus\\\Hom(V',W)\\\oplus\\\End(V')\end{array}
 \stackrel{d_2}{\longrightarrow}\ \Hom(V',V)\,, 
\end{equation}
where
\begin{gather*}
 d_0\begin{pmatrix}
     h\\
     h'
    \end{pmatrix}=
    \begin{pmatrix}
     [h,A]\\
     [h,B]\\
     hI\\
     -Jh\\
     [h',A']\\
     [h',B']\\
     hF-Fh'
    \end{pmatrix}\,,\qquad
d_1\begin{pmatrix}
     a\\
     b\\
     i\\
     j\\
     a'\\
     b'\\
     f
   \end{pmatrix}=
    \begin{pmatrix}
     [a,B]+[A,b]+Ij+iJ\\
     Af+aF-Fa'-fA'\\
     Bf+bF-Fb'-fB'\\
     jF+Jf\\
     [a',B']+[A',b']
    \end{pmatrix}\,,\\
    d_2\begin{pmatrix}
        c_1\\
        c_2\\
        c_3\\
        c_4\\
        c_5
       \end{pmatrix}=c_1F+Bc_2-c_2B'-Ac_3+c_3A'-Ic_4-Fc_5\,.
\end{gather*}

We introduce three auxiliary complexes (slightly different than those in \cite{Ja4}): 
\[
 \mathcal{C}(X')\colon\qquad\End(V)\ \stackrel{d_0}{\longrightarrow}
 \begin{array}{c}\End(V)^{\oplus 2}\\\oplus\\\Hom(W,V)\\\oplus\\\Hom(V,W)\end{array}
 \stackrel{d_1}{\longrightarrow}\ \End(V)\,,
\]
with
\begin{gather*}
 d_0(h)=
    \begin{pmatrix}
     [h,A]\\
     [h,B]\\
     hI\\
     -Jh
    \end{pmatrix}\,,\qquad
d_1\begin{pmatrix}
     a\\
     b\\
     i\\
     j
   \end{pmatrix}=
    [a,B]+[A,b]+Ij+iJ\,;
\end{gather*}\medskip
\[\
 \mathcal{C}(X'')\colon\qquad\End(V')\ \stackrel{d_0}{\longrightarrow}\ \End(V')^{\oplus 2}\ \stackrel{d_1}{\longrightarrow}\ \End(V')\,,
\]
with
\begin{gather}\label{eq:R''}
 d_0(h')=\begin{pmatrix}
           [h',A']\\
           [h',B']
          \end{pmatrix}\,,\qquad
d_1\begin{pmatrix}
    a'\\
    b'
   \end{pmatrix}=[a',B']+[A',b']\,;
\end{gather}\medskip
\[
 \mathcal{C}(X',X'')\colon\qquad\Hom(V',V)\ \stackrel{d_0}{\longrightarrow}
 \begin{array}{c}\Hom(V',V)^{\oplus 2}\\\oplus\\\Hom(V',W)\end{array}
 \stackrel{d_1}{\longrightarrow}\ \Hom(V',V)\,,
\]
with
\begin{gather*}
 d_0(f)=
    \begin{pmatrix}
     -Af+fA'\\
     -Bf+fB'\\
     -Jf
    \end{pmatrix}\,,\qquad
d_1\begin{pmatrix}
     c_2\\
     c_3\\
     c_4
   \end{pmatrix}=
    -Bc_2+c_2B'+Ac_3-c_3A'+Ic_4\,.
\end{gather*}
\begin{obs}
Notice that $\mathcal{C}(X')$ and $\mathcal{C}(X'')$ are simply the deformation complexes associated with the stable representations of the usual ADHM quiver $X'=(W,V,A,B,I,J)$ and $X''=(0,V',A',B')$, respectively, which explains the notation.
\end{obs}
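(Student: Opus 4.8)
The plan is to verify directly that the two three-term sequences $\mathcal{C}(X')$ and $\mathcal{C}(X'')$ coincide, differential by differential, with the standard deformation complexes attached to the ADHM data $X'=(W,V,A,B,I,J)$ and $X''=(0,V',A',B')$. Recall that for a representation of the usual ADHM quiver the gauge group is $\GL(V)$ and the unique relation is the ADHM equation $\mu(A,B,I,J)=[A,B]+IJ$; its deformation complex is the Lie-algebra sequence
\[
 \End(V)\xrightarrow{\ \delta_0\ }\End(V)^{\oplus2}\oplus\Hom(W,V)\oplus\Hom(V,W)\xrightarrow{\ \delta_1\ }\End(V),
\]
in which $\delta_0$ is the infinitesimal gauge action and $\delta_1$ is the linearization of $\mu$. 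The whole task is then two first-order Taylor expansions.

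First I would compute $\delta_0$. Writing $g=\mathbf{1}+\varepsilon h$ with $h\in\End(V)$ and letting $g$ act by $A\mapsto gAg^{-1}$, $B\mapsto gBg^{-1}$, $I\mapsto gI$, $J\mapsto Jg^{-1}$, the order-$\varepsilon$ terms give $\delta_0(h)=([h,A],[h,B],hI,-Jh)$, which is precisely the map $d_0$ of $\mathcal{C}(X')$; the only point requiring attention is the sign in the last slot, coming from $g^{-1}=\mathbf{1}-\varepsilon h+O(\varepsilon^2)$. Next I would linearize $\mu$: substituting $(A+\varepsilon a,B+\varepsilon b,I+\varepsilon i,J+\varepsilon j)$ and collecting the $\varepsilon$-term of $[A,B]+IJ$ yields $[a,B]+[A,b]+Ij+iJ$, which is exactly the map $d_1$ of $\mathcal{C}(X')$. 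This identifies $\mathcal{C}(X')$ with the deformation complex of the ADHM datum $X'$.

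For $\mathcal{C}(X'')$ I would specialize the same computation to $X''=(0,V',A',B')$. Since here the framing space is zero, the slots $\Hom(W,V')$ and $\Hom(V',W)$ disappear and the ADHM equation degenerates to $[A',B']=0$, while the gauge group is $\GL(V')$. The sequence therefore collapses to $\End(V')\to\End(V')^{\oplus2}\to\End(V')$ with $\delta_0(h')=([h',A'],[h',B'])$ and $\delta_1(a',b')=[a',B']+[A',b']$, matching $d_0$ and $d_1$ in \eqref{eq:R''} verbatim.

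I do not expect any genuine obstacle here: the statement is a bookkeeping identification rather than a theorem, and each verification is a single linearization. The only care needed is with sign conventions (in particular the $-Jh$ term, and the vanishing of the $I,J$ contributions when $W=0$). Finally I would record, as the reason the identification is useful, that stability of $X'$ and $X''$ forces $\delta_0$ to be injective and $\delta_1$ surjective in each complex, so that the middle cohomology computes the Zariski tangent space at the corresponding point of $\mathcal{M}(r,\cdot)$; this is what makes the subscript notation meaningful for the decomposition of $\mathcal{C}(X)$ carried out afterwards.
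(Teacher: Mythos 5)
Your identification of $\mathcal{C}(X')$ and $\mathcal{C}(X'')$ with the standard ADHM deformation complexes is correct, and it is essentially the only verification available: the paper states this as a remark without proof, and the content is exactly the two linearizations you carry out (infinitesimal gauge action for $d_0$, first-order expansion of the relations for $d_1$, with the relation degenerating to $[A',B']=0$ and the framing slots disappearing when $W=0$). So on the statement itself there is nothing to object to.

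However, your closing sentence overreaches: it is not true that ``stability of $X'$ \emph{and} $X''$ forces $\delta_0$ to be injective and $\delta_1$ surjective in each complex.'' The datum $X''=(0,V',A',B')$ has zero framing space, so it is never stable in the ADHM sense when $V'\neq 0$; concretely, $h'=\mathrm{id}_{V'}$ commutes with $A'$ and $B'$, so $H^0(\mathcal{C}(X''))\neq 0$, and since every element of the image of $d_1$ in \eqref{eq:R''} is a sum of commutators (hence traceless), $d_1$ is never surjective onto $\End(V')$ either. Only $X'$ is stable, and only for $\mathcal{C}(X')$ do the vanishings $H^0=H^2=0$ hold. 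This is not a cosmetic point: the proof of Theorem \ref{prop:vanishing} must compensate for the failure of these vanishings for $\mathcal{C}(X'')$ by showing that $H^0(\rho)$ is injective on $H^0(\mathcal{C}(X''))$ (using injectivity of $F$) and that $H^2(\mathcal{C}(X',X''))=0$ (using stability of the data $(A,B,I,J)$), rather than by appealing to any property of $X''$ alone. You should delete or correct that final claim; the identification itself stands.
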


The following lemma is readily checked.
\begin{lem}
There is a distinguished triangle
\begin{equation}\label{eq:triangle}
 \mathcal{C}(X)\longrightarrow\mathcal{C}(X')\oplus\mathcal{C}(X'')\stackrel{\rho}{\longrightarrow}\mathcal{C}(X',X'')\,,
\end{equation}
where $\rho=(\rho_0,\rho_1,\rho_2)$ with
\begin{gather*}
 \rho_0\begin{pmatrix}
     h\\
     h'
    \end{pmatrix}=-hF+Fh'\,,\qquad
\rho_1\begin{pmatrix}
     a\\
     b\\
     i\\
     j\\
     a'\\
     b'
     \end{pmatrix}=
    \begin{pmatrix}
     -aF+Fa'\\
     -bF+Fb'\\
     -jF
    \end{pmatrix}\,,\\
 \rho_2\begin{pmatrix}c_1\\c_5\end{pmatrix}=-c_1F+Fc_5\,.   
\end{gather*}
\end{lem}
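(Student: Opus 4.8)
The plan is to construct the distinguished triangle by realizing $\mathcal{C}(X)$ as the cone (shifted appropriately) of the map $\rho\colon\mathcal{C}(X')\oplus\mathcal{C}(X'')\to\mathcal{C}(X',X'')$, and the most efficient route is to check directly that we have a short exact sequence of complexes. I would first observe that in each cohomological degree $0,1,2$ the terms of $\mathcal{C}(X)$ split as the direct sum of the corresponding terms of $\mathcal{C}(X')\oplus\mathcal{C}(X'')$ and of $\mathcal{C}(X',X'')$ shifted down by one: in degree $0$ we have $\End(V)\oplus\End(V')$ on one side and $\Hom(V',V)$ (the degree-$0$ term of $\mathcal{C}(X',X'')$) appearing as a summand of the degree-$1$ term of $\mathcal{C}(X)$, and so on. Concretely, the injection $\iota\colon\mathcal{C}(X)\hookrightarrow\mathcal{C}(X')\oplus\mathcal{C}(X'')$ is the obvious inclusion that forgets the $f$-, $c_2$-, $c_3$-, $c_4$-components, and $\rho$ is the surjection onto $\mathcal{C}(X',X'')$ described in the statement.

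Second, I would verify that $\iota$ and $\rho$ are genuine chain maps, i.e.\ that they commute with the differentials $d_0,d_1,d_2$. This is a finite check, degree by degree. For $\rho$ it amounts to confirming three identities: that $d_0^{(X',X'')}\circ\rho_0=\rho_1\circ\bigl(d_0^{(X')}\oplus d_0^{(X'')}\bigr)$, that $d_1^{(X',X'')}\circ\rho_1=\rho_2\circ\bigl(d_1^{(X')}\oplus d_1^{(X'')}\bigr)$, and the analogous compatibility at the top. Each reduces to substituting the explicit formulas and using bilinearity of the commutator together with the defining relations \textbf{(R1)}--\textbf{(R5)}; for instance the first identity unwinds to $-[h,A]F+F[h',A']=-(hF-Fh')A'+A(-hF+Fh')$ (up to sign conventions) plus the analogous $B$- and $J$-lines, which holds identically once one expands the commutators. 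The maps $d_0,d_1,d_2$ of $\mathcal{C}(X)$ were in fact designed precisely so that deleting the relevant rows and columns yields the three smaller complexes, so these verifications are routine.

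Third, having a short exact sequence of complexes $0\to\mathcal{C}(X)\xrightarrow{\iota}\mathcal{C}(X')\oplus\mathcal{C}(X'')\xrightarrow{\rho}\mathcal{C}(X',X'')\to 0$, I would invoke the standard fact that such a sequence induces a distinguished triangle in the derived category (equivalently in the homotopy category, since these are complexes of vector spaces), which is exactly the triangle \eqref{eq:triangle} after the customary shift. The only point requiring a moment's care is exactness in the middle at each degree: one must confirm that the kernel of $\rho$ coincides with the image of $\iota$, which follows because, at each degree, $\rho$ is the projection onto the $\mathcal{C}(X',X'')$-summand and $\iota$ is the inclusion of its complement, so the sequence is split even at the level of graded vector spaces. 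I expect no genuine obstacle here; the assertion that the lemma ``is readily checked'' is accurate, and the only mild bookkeeping concern is tracking the sign conventions in $d_1$ and $d_2$ so that $\rho$ commutes with the differentials on the nose rather than merely up to homotopy.
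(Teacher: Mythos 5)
Your opening sentence states the right plan --- identify $\mathcal{C}(X)$ with the shifted cone of $\rho$ --- but the execution contradicts it, and the short exact sequence you write down does not exist. In degree $0$ the middle term $(\mathcal{C}(X')\oplus\mathcal{C}(X''))^0=\End(V)\oplus\End(V')$ has the same dimension as $\mathcal{C}(X)^0$, so it cannot also surject onto $\mathcal{C}(X',X'')^0=\Hom(V',V)$ with kernel equal to the image of $\iota$; the required equality $\dim B^n=\dim A^n+\dim C^n$ fails already at $n=0$ (and at every other degree). Relatedly, the map you call $\iota$, ``forgetting the $f$-, $c_2$-, $c_3$-, $c_4$-components,'' is a degreewise \emph{surjection} $\mathcal{C}(X)\twoheadrightarrow\mathcal{C}(X')\oplus\mathcal{C}(X'')$, not an injection; and $\rho$ is not a projection onto a direct summand in any degree: for instance $\rho_0\colon\End(V)\oplus\End(V')\to\Hom(V',V)$, $(h,h')\mapsto -hF+Fh'$, has as target a space that is not a summand of its source. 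Your argument for exactness in the middle (``the sequence is split even at the level of graded vector spaces'') is therefore false as stated, and the triangle produced by your (nonexistent) sequence would in any case not be the one in the statement.

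The correct bookkeeping is the transpose of yours. The degreewise projection $\pi\colon\mathcal{C}(X)\to\mathcal{C}(X')\oplus\mathcal{C}(X'')$ is surjective with kernel $\mathcal{C}(X',X'')[-1]$, giving a short exact sequence of complexes
\[
0\longrightarrow\mathcal{C}(X',X'')[-1]\longrightarrow\mathcal{C}(X)\stackrel{\pi}{\longrightarrow}\mathcal{C}(X')\oplus\mathcal{C}(X'')\longrightarrow 0\,,
\]
whose rotation is the triangle \eqref{eq:triangle}; equivalently, $\mathcal{C}(X)^n=(\mathcal{C}(X')\oplus\mathcal{C}(X''))^n\oplus\mathcal{C}(X',X'')^{n-1}$ with differential $(x,y)\mapsto(dx,\,-\rho(x)-dy)$, i.e.\ $\mathcal{C}(X)\cong\mathrm{Cone}(\rho)[-1]$ up to signs. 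The two points that actually need verification are: (a) $\rho$ is a chain map, which uses the relations \textbf{(R2)}--\textbf{(R4)} and not merely bilinearity of the commutator --- e.g.\ the first component of $d_0^{(X',X'')}\rho_0(h,h')-\rho_1\bigl(d_0^{(X')}h,\,d_0^{(X'')}h'\bigr)$ reduces to $h(AF-FA')-(AF-FA')h'$, which vanishes by \textbf{(R2)}, and the $\Hom(V',W)$-component needs $JF=0$; and (b) the extra terms in $d_1$ and $d_2$ of $\mathcal{C}(X)$ involving $f$ and $c_2,c_3,c_4$ reproduce exactly $-\rho-d_{\mathcal{C}(X',X'')}$ on the shifted summand. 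Both checks are routine, but they are the content of the lemma, and your proposal as written does not carry them out on the correct exact sequence.
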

%\begin{proof}
%One can easily see that $\mathcal{C}(R)[1]$ is a cone for $\rho$. We omit the explicit computation, and just recall that:
%\begin{itemize}
% \item  $\mathcal{C}(R)[1]^i=\mathcal{C}(R)^{i+1}$, and $d_i[1]=-d_{i+1}$; in particular we have nonzero elements only in position $-1,0,1,2$;
% \item in an additive category, the cone of a morphism $f\colon X\to Y$ is given by $(C_f,d_{C_f})$ where
% \[
%  C_f^i=X^{i+1}\oplus Y^i\qquad\text{and}\qquad\left(d_{C_f}\right)_i=
%  \begin{pmatrix}
%   -d_X^{i+1}&0\\
%   f^{i+1}&d_Y^i
%  \end{pmatrix}
% \]
%(see for instance \cite[Chapter 3, \S1.5]{dercat}).
%\end{itemize}

%\end{proof}
 
\begin{thm}\label{prop:vanishing}
 The moduli space $\mathcal{N}(r,c,c')$ has a perfect obstruction theory, i.e., for every $[X]\in\mathcal{N}(r,c,c')$, $H^0(\mathcal{C}(X))=H^3(\mathcal{C}(X))=0$.
\end{thm}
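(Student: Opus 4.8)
The plan is to compute the two outer cohomology groups of $\mathcal{C}(X)$ directly, since $H^0(\mathcal{C}(X)) = \ker d_0$ and $H^3(\mathcal{C}(X)) = \operatorname{coker} d_2$, feeding in the only structural information available at a point of $\mathcal{N}(r,c,c')$: by Lemma \ref{lem4}, once the stability parameters are chosen in the chamber $\Delta$, every such $[X]$ is $\Theta$-stable, so $F$ is injective (S.1) and the ADHM datum $(W,V,A,B,I,J)$ is stable (S.2). I expect these two facts to be exactly what the argument requires.

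For $H^0(\mathcal{C}(X))$, let $(h,h')\in\ker d_0$. The seven components of $d_0(h,h')=0$ say that $h$ commutes with $A$ and $B$, that $\operatorname{im}I\subseteq\ker h$ (from $hI=0$), that $\operatorname{im}h\subseteq\ker J$ (from $Jh=0$), that $h'$ commutes with $A'$ and $B'$, and finally that $hF=Fh'$. First I would invoke (S.2): as $h$ commutes with $A,B$, the subspace $\ker h$ is $A,B$-invariant and it contains $\operatorname{im}I$, so stability forces $\ker h=V$, i.e. $h=0$. The relation $hF=Fh'$ then reduces to $Fh'=0$, and injectivity of $F$ (S.1) yields $h'=0$. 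Hence $\ker d_0=0$.

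For $H^3(\mathcal{C}(X))$, I would show that $d_2$ is already surjective through its leading term. Setting $c_2=c_3=c_4=c_5=0$ collapses $d_2$ to the map $\End(V)\to\Hom(V',V)$, $c_1\mapsto c_1F$. Since $F$ is injective this is onto: given $g\in\Hom(V',V)$, define $c_1$ on $\operatorname{im}F$ by $c_1(Fv'):=g(v')$ and extend arbitrarily to $V$, so that $c_1F=g$. Therefore $\operatorname{coker}d_2=0$.

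The computations above are routine, the only genuine content being the correct use of stability; so the harder part is really to decide on the cleanest packaging. One conceptual alternative is to read both vanishings off the distinguished triangle \eqref{eq:triangle}, combining the known equalities $H^0(\mathcal{C}(X'))=H^2(\mathcal{C}(X'))=0$ for the stable ADHM datum $X'$ with the connecting maps $\rho_0,\rho_2$. The step I would expect to demand the most care is precisely here: $\mathcal{C}(X'')$, being the deformation complex of the commuting pair $(A',B')$, has nonvanishing $H^0$ (its centralizer always contains the scalars) and, by self-duality, nonvanishing $H^2$, so the triangle does not hand us the result for free. One would have to check that $h'\mapsto Fh'$ is injective on $H^0(\mathcal{C}(X''))$ and that $c_5\mapsto Fc_5$, modulo coboundaries, surjects onto $H^2(\mathcal{C}(X',X''))$. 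Both reductions again rest on the injectivity of $F$, which is why I would ultimately prefer the direct computation.
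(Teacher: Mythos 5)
Your proof is correct, but it takes a more direct route than the paper. The paper derives both vanishings from the long exact sequence \eqref{eq:long} attached to the distinguished triangle \eqref{eq:triangle}: $H^0(\mathcal{C}(X))=0$ because $H^0(\mathcal{C}(X'))=0$ and $H^0(\rho)$ is injective on $H^0(\mathcal{C}(X''))$ (injectivity of $F$), and $H^3(\mathcal{C}(X))=0$ because $d_1\colon\mathcal{C}(X',X'')^1\to\mathcal{C}(X',X'')^2$ is surjective, which the paper proves by dualizing and invoking $(S.2)$. You instead compute $\ker d_0$ and $\operatorname{coker} d_2$ by hand; both computations are right, and your treatment of $H^3$ is genuinely simpler --- the single term $c_1\mapsto c_1F$ already surjects onto $\Hom(V',V)$ because precomposition with the (split) injection $F$ is surjective, so you need only $(S.1)$ there, whereas the paper's argument for $H^3$ routes through $(S.2)$. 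Your closing caveat about the triangle approach is also accurate: $H^0(\mathcal{C}(X''))$ and $H^2(\mathcal{C}(X''))$ do not vanish (the centralizer of the commuting pair $(A',B')$ contains the scalars), so the triangle does not give the theorem for free, and the paper indeed has to perform exactly the two checks you describe. The one thing your shortcut forgoes is that the long exact sequence \eqref{eq:long} is not scaffolding the paper could have discarded: it is reused in Section \ref{sec:sm}, where unobstructedness of the explicit representations is reduced to surjectivity of $H^1(\rho)$. So your argument is the leaner proof of this theorem in isolation, while the paper's packaging pays off downstream.
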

 
\begin{proof}
 From the triangle in \eqref{eq:triangle} one gets the long exact sequence
 \begin{multline}\label{eq:long}\xymatrix{
  0\ar[r]& H^0(\mathcal{C}(X))\ar[r]& {H^0(\mathcal{C}(X'))\oplus H^0(\mathcal{C}(X''))}\ar[r]^-{H^0(\rho)}& H^0(\mathcal{C}(X',X''))
 }\\
 \xymatrix{
  {}\ar[r]& H^1(\mathcal{C}(X))\ar[r]& {H^1(\mathcal{C}(X'))\oplus H^1(\mathcal{C}(X''))}\ar[r]^-{H^1(\rho)}& H^1(\mathcal{C}(X',X''))\ar[r]& H^2(\mathcal{C}(X))
 }\\
 \xymatrix{
  {}\ar[r]& {H^2(\mathcal{C}(X'))\oplus H^2(\mathcal{C}(X''))}\ar[r]& H^2(\mathcal{C}(X',X''))\ar[r]&H^3(\mathcal{C}(X))\ar[r]&0\,.
 }
 \end{multline}
Since $X'$ is a stable representation of the usual ADHM quiver (in that case, the smoothness of the moduli space is well-known), one has \[H^0(\mathcal{C}(X'))=H^2(\mathcal{C}(X'))=0\,.\]

We claim that $H^0(\rho)\colon H^0(\mathcal{C}(X''))\to H^0(\mathcal{C}(X',X''))$ is injective. In fact, one has
\[
 H^0(\rho)(h')=0\ \Rightarrow Fh'=0\ \Rightarrow\ h'=0\,,
\]
as $F$ is injective. This implies $H^0(\mathcal{C}(X))=0$.

To conclude the proof, we show that $d_1\colon \mathcal{C}(X',X'')^1\to\mathcal{C}(X',X'')^2$ is surjective. This implies $H^2(\mathcal{C}(X',X''))=0$, and so $H^3(\mathcal{C}(X))=0$. We claim equivalently that the dual
\[
 d_1^\vee\colon \Hom(V,V')\to
 \begin{array}{c}
  \Hom(V,V')^{\oplus 2}\\
  \oplus\\
  \Hom(W,V')
 \end{array}\ ,\qquad d_1^\vee(\phi)=
 \begin{pmatrix}
  B'\phi-\phi B\\
  -A'\phi+\phi A\\
  \phi I
 \end{pmatrix}
\]
is injective. In fact, it is easy to show that, if $\phi\in\ker(d_1^\vee)$, then $\ker(\phi)$ is an $(A,B)$-invariant subspace that contains $\im(I)$. By $($S.2$)$, $\ker(\phi)=V$, that is $\phi=0$.
\end{proof}

\begin{obs}
 Directly from the deformation complex in \eqref{eq:cotan}, one sees that the moduli space $\mathcal{N}(r,c,c')$ has expected dimension $r(2c-c')$ when $c'>1$; the case $c'=1$, will be treated below.
\end{obs}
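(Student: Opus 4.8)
The plan is to obtain the expected dimension as the virtual rank of the perfect obstruction theory furnished by Theorem \ref{prop:vanishing}. That theorem gives $H^0(\mathcal{C}(X)) = H^3(\mathcal{C}(X)) = 0$, so the expected dimension at $[X]$ is the difference $\dim H^1(\mathcal{C}(X)) - \dim H^2(\mathcal{C}(X))$ between the tangent space and the obstruction space. The essential point is that, precisely because the outer cohomology vanishes, this difference equals minus the Euler characteristic of the four-term complex $\mathcal{C}(X)$; and the Euler characteristic is insensitive to the differentials, being simply the alternating sum of the dimensions of the terms of \eqref{eq:cotan}. Thus no information about the ranks of $d_0, d_1, d_2$ is needed — only the vanishing of $H^0$ and $H^3$, already in hand.

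Concretely, I would set $\dim W = r$, $\dim V = c$, $\dim V' = c'$ and write the expected dimension as
\[
 \dim\mathcal{C}(X)^1 + \dim\mathcal{C}(X)^3 - \dim\mathcal{C}(X)^0 - \dim\mathcal{C}(X)^2\,.
\]
Reading the terms off \eqref{eq:cotan} gives $\dim\mathcal{C}(X)^0 = c^2 + c'^2$, $\dim\mathcal{C}(X)^1 = 2c^2 + 2rc + 2c'^2 + cc'$, $\dim\mathcal{C}(X)^2 = c^2 + 2cc' + rc' + c'^2$ and $\dim\mathcal{C}(X)^3 = cc'$, where one only has to be careful in transcribing the $\Hom$ and $\End$ factors, e.g.\ $\dim\Hom(V',V) = cc'$ and $\dim\Hom(V',W) = rc'$. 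Substituting, every $c^2$, $c'^2$ and $cc'$ contribution cancels, leaving $2rc - rc' = r(2c-c')$, as claimed.

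The only genuine subtlety — and the reason for the hypothesis $c' > 1$ — lies outside this arithmetic. When $c' = 1$ the algebra $\End(V')$ is commutative, so relation \textbf{(R5)}, $[A',B'] = 0$, holds identically and imposes no constraint; its linearization, which is exactly the $\End(V')$ summand of the degree-two term $\mathcal{C}(X)^2$, therefore ought to be discarded. Removing this one-dimensional summand lowers $\dim\mathcal{C}(X)^2$ by one and hence raises the virtual dimension to $r(2c-1) + 1 = 2rc - r + 1$, precisely the anomalous value recorded in the introduction and deferred to the subsequent section. I would accordingly state the Euler-characteristic computation under the standing assumption $c' > 1$, under which \eqref{eq:cotan} is the faithful deformation–obstruction complex, and isolate the degenerate case $c' = 1$ separately. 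The main conceptual obstacle is thus not the dimension count itself but the clean separation of this degenerate case.
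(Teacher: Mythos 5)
Your proposal is correct and follows exactly the argument the paper leaves implicit: since Theorem \ref{prop:vanishing} gives $H^0=H^3=0$, the virtual dimension $\dim H^1-\dim H^2$ equals minus the Euler characteristic of \eqref{eq:cotan}, and your term-by-term count ($c^2+c'^2$, $2c^2+2rc+2c'^2+cc'$, $c^2+2cc'+rc'+c'^2$, $cc'$) correctly yields $r(2c-c')$. Your explanation of the $c'=1$ anomaly — dropping the one-dimensional $\End(V')$ summand in degree two because \textbf{(R5)} trivializes, raising the count to $2rc-r+1$ — is precisely how the paper handles it via the simplified complex \eqref{eq:compl-c'=1} in Section \ref{sec:sm}.
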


%%%%%%%%%%%%%%%%%%%%%%%%%%%%%%%%%%%%%%%%%%%%%%%%%%%%%%%%%%%%
%%%%%%%%%%%%%%%%%%%%%%%%%%%%%%%%%%%%%%%%%%%%%%%%%%%%%%%%%%%%

\section{Unobstructed representations} \label{sec:sm}

The moduli spaces $\mathcal{N}(r,c,c')$ are not smooth in general. For  $c=3$, $c'=2$, and arbitrary rank, specific examples can be found by explicitly choosing suitable representations, and showing that the dimension of the first cohomology groups of their deformation complexes \eqref{eq:cotan} is not constant (through a direct computation).

However, it is already known \cite{nested} that $\mathcal{N}(1,c,c')$ (which corresponds to the nested Hilbert scheme of points of $\mathbb{P}^2$) is smooth precisely when $c'=1$. One might wonder if this numerical assumption guarantees smoothness also for higher rank, but this problem is still open, at the moment; nonetheless, we show now that $\mathcal{N}(r,c,1)$ is at least \emph{generically unobstructed}, meaning that it always contains points for which the second cohomology group of the deformation complex vanishes.

We start by noticing that the condition $c'=1$ provides a lot of extra commutations (in particular, relation \textbf{(R5)} trivializes), so that the deformation complex $\mathcal{C}(X)$ in eq.~\eqref{eq:cotan} can be simplified as follows:
\begin{equation}\label{eq:compl-c'=1}
 \begin{array}{c}\End(V)\\\oplus\\\End(V')\end{array}\stackrel{d_0}{\longrightarrow}
 \begin{array}{c}\End(V)^{\oplus 2}\\\oplus\\\Hom(W,V)\\\oplus\\\Hom(V,W)\\\oplus\\\End(V')^{\oplus 2}\\\oplus\\\Hom(V',V)\end{array}
 \stackrel{d_1}{\longrightarrow}
 \begin{array}{c}\End(V)\\\oplus\\\Hom(V',V)^{\oplus 2}\\\oplus\\\Hom(V',W)\end{array}
 \stackrel{d_2}{\longrightarrow}\ \Hom(V',V)\,,
\end{equation}
where
\begin{gather*}
 d_0\begin{pmatrix}
     h\\
     h'
    \end{pmatrix}=
    \begin{pmatrix}
     [h,A]\\
     [h,B]\\
     hI\\
     -Jh\\
     0\\
     0\\
     hF-Fh'
    \end{pmatrix}\,,\qquad
d_1\begin{pmatrix}
     a\\
     b\\
     i\\
     j\\
     a'\\
     b'\\
     f
   \end{pmatrix}=
    \begin{pmatrix}
     [a,B]+[A,b]+Ij+iJ\\
     Af+aF-Fa'-fA'\\
     Bf+bF-Fb'-fB'\\
     jF+Jf
    \end{pmatrix}\,,\\
    d_2\begin{pmatrix}
        c_1\\
        c_2\\
        c_3\\
        c_4
       \end{pmatrix}=c_1F+Bc_2-c_2B'-Ac_3+c_3A'-Ic_4\,.
\end{gather*}
The complexes $\mathcal{C}(X')$ and $\mathcal{C}(X',X'')$ do not change, while $\mathcal{C}(X'')$ becomes
\begin{equation}\label{eq:CX''}
 \mathcal{C}(X'')\colon\qquad\End(V')\ \stackrel{0}{\longrightarrow}\ \End(V')^{\oplus 2}\,,
\end{equation}
and the map $\rho$ changes only in the component $\rho_2(c_1)=-c_1F$.

The simplification in the deformation complex does not affect Theorem \ref{prop:vanishing} (which was in fact stated in full generality); however, note that the expected dimension computed through eq.~\eqref{eq:compl-c'=1} differs from the general formula, being equal to $2rc-r+1$ (instead of $2rc-r$).

\begin{pps}
For any choice of $\lambda_1,\dots,\lambda_c\in\mathbb{C}\setminus\{0\}$ with $\lambda_i\neq\lambda_j$, for $i\neq j$, consider the representation $X=(\mathbb{C}^r,\mathbb{C}^c,\mathbb{C},A,B,I,0,A',B',F)$, where
\begin{equation}\label{X-suave}
    A= B = \begin{pmatrix}
            \lambda_1 & \cdots & 0 \\
            \vdots & \ddots & \vdots \\
            0 & \cdots & \lambda_c
    \end{pmatrix}\,,\quad
    I = \begin{pmatrix}
1 & 0 & \cdots & 0 \\
%1 & 0 & \cdots & 0\\
\vdots & \vdots & \ddots & \vdots \\
1 & 0 & \cdots & 0
    \end{pmatrix}\,,\quad
    F = \begin{pmatrix}
    1\\
    0\\
    \vdots\\
    0
    \end{pmatrix}\,,\quad
    A'=B'= \begin{pmatrix}\lambda_1\end{pmatrix}\,.
%    \begin{bmatrix}
%    \lambda_1 & 0 & 0 & \cdots & 0 \\
%    0 & \lambda_2 & 0 & \cdots & 0 \\
%    \vdots & \vdots & \vdots & \ddots & \vdots \\
%    0 & 0 & 0 & \cdots & \lambda_{c'}
%    \end{bmatrix}.
\end{equation}
\begin{itemize}
    \item[(i)] $X$ is stable;
    \item[(ii)] $[X]\in \mathcal{N}(r,c,1)$ is unobstructed.
\end{itemize}
\end{pps}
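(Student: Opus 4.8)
The plan is to verify the two assertions in turn, using the stability criterion of Lemma~\ref{lem4} for part (i) and the vanishing formalism of Theorem~\ref{prop:vanishing} for part (ii). For stability, I would check conditions $(S.1)$ and $(S.2)$ directly on the explicit data \eqref{X-suave}. Condition $(S.1)$ is immediate: $F=(1,0,\dots,0)^{t}$ is visibly injective since $V'=\mathbb{C}$ and $F\neq 0$. For condition $(S.2)$, I would argue that any subspace $0\subseteq S\subsetneq V=\mathbb{C}^c$ preserved by $A=B=\operatorname{diag}(\lambda_1,\dots,\lambda_c)$ and containing $\operatorname{im}(I)=\langle(1,1,\dots,1)^{t}\rangle$ must be all of $V$. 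The key is that the $\lambda_i$ are pairwise distinct, so the eigenspaces of $A$ are exactly the coordinate lines $\langle e_i\rangle$; an $A$-invariant subspace is therefore a coordinate subspace $\bigoplus_{i\in T}\langle e_i\rangle$, and the only such subspace containing the vector $(1,\dots,1)^{t}$ (all of whose coordinates are nonzero) is the one with $T=\{1,\dots,c\}$, i.e.\ $S=V$. This gives $(S.2)$, hence $\Theta$-stability by Lemma~\ref{lem4}.

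For part (ii), by Theorem~\ref{prop:vanishing} the obstruction space is $H^2(\mathcal{C}(X))$, and unobstructedness means $H^2(\mathcal{C}(X))=0$. Here I would exploit the simplified complex \eqref{eq:compl-c'=1} valid for $c'=1$ and the long exact sequence \eqref{eq:long} arising from the triangle \eqref{eq:triangle}. The sequence gives
\[
 H^1(\mathcal{C}(X',X''))\longrightarrow H^2(\mathcal{C}(X))\longrightarrow H^2(\mathcal{C}(X'))\oplus H^2(\mathcal{C}(X''))\,,
\]
and since $X'$ is a stable ADHM representation one has $H^2(\mathcal{C}(X'))=0$, while from \eqref{eq:CX''} the complex $\mathcal{C}(X'')$ has trivial differentials so that $H^2(\mathcal{C}(X''))=0$ as well. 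Thus $H^2(\mathcal{C}(X))$ is a quotient of $H^1(\mathcal{C}(X',X''))$, and it suffices to track the image of the connecting map $H^1(\rho)$ and show it is surjective onto $H^1(\mathcal{C}(X',X''))$, equivalently that $H^1(\mathcal{C}(X',X''))$ lands entirely in $\operatorname{im}H^1(\rho)$. Since for $c'=1$ all spaces $\operatorname{Hom}(V',V)\cong\mathbb{C}^c$ and $\operatorname{Hom}(V',W)\cong\mathbb{C}^r$ are small, I expect to compute $H^1(\mathcal{C}(X',X''))$ and the relevant maps explicitly.

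The cleanest route, which I would carry out, is a direct cohomology computation of $\mathcal{C}(X',X'')$ for the given data. With $A=B=\operatorname{diag}(\lambda_i)$, $A'=B'=(\lambda_1)$, $I$ as above and $J=0$, the differentials of $\mathcal{C}(X',X'')$ become explicit linear maps on $\mathbb{C}^c$ and $\mathbb{C}^r$; because $A=B$ and $A'=B'$, the two $\operatorname{Hom}(V',V)$ components in degree one collapse in a controlled way, and the eigenvalue $\lambda_1$ singled out by $A'=B'$ interacts with the first coordinate direction of $V$, which is exactly the one hit by $F$ and by $\operatorname{im}(I)$. I would compute $\ker d_1$ and $\operatorname{im}d_0$ for this complex and combine with the injectivity of $H^0(\rho)$ (already established in Theorem~\ref{prop:vanishing}) to conclude $H^2(\mathcal{C}(X))=0$. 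The main obstacle I anticipate is bookkeeping: the connecting homomorphism in \eqref{eq:long} must be matched against the explicit generators, and one must be careful that the coincidences $A=B$, $A'=B'$ do not introduce spurious classes in $H^1(\mathcal{C}(X',X''))$ that fail to be killed. I expect the distinctness and nonvanishing of the $\lambda_i$ to be precisely what rules out such classes, mirroring their role in the stability argument.
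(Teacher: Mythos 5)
Your part (i) is correct and is essentially the paper's argument in different clothing: the paper takes the iterates $A^k(1,\dots,1)^{t}$ and invokes the invertibility of the resulting Vandermonde matrix, whereas you decompose an $A$-invariant subspace into coordinate eigenlines and observe that only the full sum contains $(1,\dots,1)^{t}$. Both rest on the distinctness of the $\lambda_i$ and are equally valid.

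Part (ii), however, has a genuine gap. You correctly reduce the vanishing of $H^2(\mathcal{C}(X))$ to the surjectivity of $H^1(\rho)\colon H^1(\mathcal{C}(X'))\oplus H^1(\mathcal{C}(X''))\to H^1(\mathcal{C}(X',X''))$, using $H^2(\mathcal{C}(X'))=0$ (stability of $X'$) and $H^2(\mathcal{C}(X''))=0$ (the complex \eqref{eq:CX''} has length two). But the surjectivity itself --- which is the entire content of the statement --- is only announced as a computation you ``would carry out''; no cocycle, kernel, or preimage is ever exhibited, so nothing is actually proved. The paper supplies the missing device: let $F'=(1\ 0\ \cdots\ 0)$ be the left inverse of $F$. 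Because $A'=B'=(\lambda_1)$ is precisely the eigenvalue of $A=B$ in the coordinate direction hit by $F$, one has the intertwining identities $F'A=A'F'$ and $F'B=B'F'$. Then for any degree-one cocycle $(c_2\ c_3\ c_4)^{t}$ of $\mathcal{C}(X',X'')$, the element $(-c_2F',\,-c_3F',\,0,\,-c_4F',\,0,\,0)^{t}$ maps to it under $\rho_1$ and is itself a cocycle, since applying $d_1$ yields $(-c_2B+Bc_2-Ac_3+c_3A'-Ic_4)F'$, which vanishes by the cocycle condition; hence $\rho_1$ is surjective already on cocycles and $H^1(\rho)$ is surjective. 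Your closing remark about $\lambda_1$ ``interacting with the first coordinate direction'' gestures at exactly these identities, but without writing them down and producing the explicit cocycle preimage the argument remains incomplete.
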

\begin{proof}
The map $F$ is clearly injective. To prove condition $($S.2$)$, assume there is a subspace $S\subseteq \mathbb{C}^c$ such that $A(S), B(S), \im I\subseteq S$; since $v := (1 \, \cdots \, 1)^T\in \im I\subset S$, also
 \begin{align*}
     v_k:= A^k(v) = 
     \begin{pmatrix}
    \lambda_1^k\\
\vdots \\
    \lambda_{c}^k
    \end{pmatrix} \in S\qquad\text{for all $k\in\mathbb{N}$.}
 \end{align*}
  On the other hand, $\{v, v_1, \cdots, v_{c-1}\}$ is a basis of $\mathbb{C}^c$: indeed, the matrix
 \begin{align*}
     M :=   \begin{pmatrix}
            v &|& v_1 &|& \cdots &|& v_{c-1} 
            \end{pmatrix}
\end{align*}
 is a Vandermonde matrix (hence, invertible).
 We conclude $S = \mathbb{C}^c$, and $X$ is stable.
 
 To prove item (ii), we first notice that, by Proposition \ref{prop:vanishing} and eq.~\eqref{eq:CX''}, the long exact sequence in eq.~\eqref{eq:long} reduces to
\begin{multline}\label{eq:long2}
 \xymatrix{
  0\ar[r]& H^0(\mathcal{C}(X''))\ar[r]^-{H^0(\rho)}& H^0(\mathcal{C}(X',X''))\ar[r]& H^1(\mathcal{C}(X))\ar[r]&{}} \\
 \xymatrix{
  {H^1(\mathcal{C}(X'))\oplus H^1(\mathcal{C}(X''))
 }\ar[r]^-{H^1(\rho)}& H^1(\mathcal{C}(X',X''))\ar[r]& H^2(\mathcal{C}(X))\ar[r]&0\,.
 }
\end{multline}
To deduce $H^2(\mathcal{C}(X))=0$ it would be enough to show that $H^1(\rho)$ is surjective; we prove instead an even stronger result, that is, that $\rho_1$ is surjective on cocyles.

Let $(c_2 ~~ c_3 ~~ c_4)^{\textrm{T}}$ be an element of $\mathcal{C}(X',X'')^1$ such that
\begin{align}\label{cocycle01}
    d_1\begin{pmatrix}
     c_2\\
     c_3\\
     c_4
    \end{pmatrix}=-Bc_2+c_2B'+Ac_3-c_3A'+Ic_4=0\,. 
\end{align}
 If we define $F':=\begin{pmatrix}
     1&0&\cdots&0
 \end{pmatrix}$ (a left inverse for $F$), one has
\[
 \begin{pmatrix}
  c_2\\
  c_3\\
  c_4
 \end{pmatrix}=\rho_1
 \begin{pmatrix}
  -c_2F'\\
  -c_3F'\\
  i\\
  -c_4F'\\
  0\\
  0
 \end{pmatrix}\,,
\]
for any choice of $i\in\Hom(\mathbb{C}^r,\mathbb{C}^c)$ (in particular, this shows that $\rho_1$ is surjective). To end the proof it is enough to show that there is a counterimage which is actually a cocycle. We claim that this corresponds to the choice $i=0$. In fact, as $F'A=A'F'$ (and $F'B=B'F'$),
\begin{align}\label{cocycle02}
 d_1\begin{pmatrix}
  -c_2F'\\
  -c_3F'\\
  0\\
  -c_4F'\\
  0\\
  0
 \end{pmatrix}&=[-c_2F',B]+[A,-c_3F']-Ic_4F' \nonumber \\
              &=(-c_2B+Bc_2-Ac_3'+c_3A'-Ic_4)F'\,,
\end{align}
and this vanishes by eq.~\eqref{cocycle01}, as wanted.
\end{proof}

\frenchspacing

\bibliographystyle{siam}

\bibliography{bibliografia-LC}

\end{document}